\newcommand{\Hmm}[1]{\leavevmode{\marginpar{\tiny%
$\hbox to 0mm{\hspace*{-0.5mm}$\leftarrow$\hss}%
\vcenter{\vrule depth 0.1mm height 0.1mm width \the\marginparwidth}%
\hbox to 0mm{\hss$\rightarrow$\hspace*{-0.5mm}}$\\\relax\raggedright #1}}}
\newcommand{\N}{{\mathbb{N}}}
\newcommand{\R}{{\mathbb{R}}}
\newcommand{\C}{{\mathbb{C}}}
\newcommand{\Z}{{\mathbb{Z}}}
\newcommand{\f}{\frac}
\newcommand{\ol}{\overline}
\newcommand{\loc}{\text{\rm{loc}}}
\newcommand{\hatt}{\widehat}
\newcommand{\beq}{\begin{equation}}
\newcommand{\eeq}{\end{equation}}
\newcommand{\bdm}{\begin{displaymath}}
\newcommand{\edm}{\end{displaymath}}
\newcommand{\ba}{\begin{align}}
\newcommand{\ea}{\end{align}}
\newcommand{\bpf}{\begin{proof}}
\newcommand{\epf}{\end{proof}}
\newcommand{\la}{\langle}
\newcommand{\ra}{\rangle}
\newcommand{\veps}{\varepsilon}
\newcommand{\re}{\mathrm{Re}}
\newcommand{\im}{\mathrm{Im}}
\newcommand{\dav}{{d_{\mathrm{av}}}}
\newcommand{\calC}{\mathcal{C}}
\newcommand{\calI}{\mathcal{I}}
\newcommand{\calS}{{\mathcal{S}}}
\newtheorem{theorem}{Theorem}
\newtheorem{proposition}[theorem]{Proposition}
\newtheorem{lemma}[theorem]{Lemma}
\newtheorem{corollary}[theorem]{Corollary}
\theoremstyle{definition}
\newtheorem{remark}[theorem]{Remark}
\newcounter{theoremi}[theorem]
\numberwithin{theorem}{section}
\numberwithin{equation}{section}
\newcounter{assumptions}
\newcounter{smalllist}
\newcounter{listi}
\newenvironment{theoremlist}{\begin{list}{{\rm(\roman{listi})}}{%
\setlength{\topsep}{0mm}\setlength{\parsep}{0mm}\setlength{\itemsep}{0mm}%
\setlength{\labelwidth}{1.5em}\setlength{\leftmargin}{1.7em}\usecounter{listi}%
}}{\end{list}}
\newcounter{smallenum}
\begin{document}

\title[ DMNLS with lumped amplification ]{On Dispersion managed nonlinear Schr\"odinger equations with lumped amplification }
\author[M.--R. Choi, Y. Kang, Y.--R. Lee]{Mi-Ran Choi$^\dag$, Younghoon Kang$^\ddag$, Young-Ran Lee$^\ddag$}
\address{$^\dag$ Research Institute for Basic Science, Sogang University, 35 Baekbeom--ro (Sinsu--dong),
    Mapo-gu, Seoul 04107, South Korea.}%
\email{rani9030@sogang.ac.kr}


\address{$^\ddag$ Department of Mathematics, Sogang University, 35 Baekbeom--ro (Sinsu--dong),
    Mapo--gu, Seoul 04107, South Korea.}%
\email{kkyh0409@sogang.ac.kr, younglee@sogang.ac.kr}

\thanks{\textit{MSC2020 classification}.35Q55, 35Q60, 35A01}
\thanks{\copyright 2021 by the authors. Faithful reproduction of this article,
       in its entirety, by any means is permitted for non-commercial purposes}
\keywords{nonlinear Schr\"odinger equation, dispersion management, well--posedness, averaging }

\begin{abstract}
We show the global well--posedness of the nonlinear Schr\"odinger equation with periodically varying coefficients and a small parameter $\veps>0$, which is used in optical--fiber communications. We also prove that the solutions converge to the solution for the Gabitov--Turitsyn or averaged equation as $\veps$ tends to zero.
\end{abstract}

\maketitle

\section{Introduction }\label{introduction}

We consider the nonlinear Schr\"odinger equation (NLS) with periodically
varying coefficients
\beq \label{eq:intro}
i\partial_t u +d(t) \partial_x^2 u +c(t)|u|^2 u = 0
\eeq
which describes the behavior of a signal transmitted on an optical--fiber cable.
Here, $x$  denotes the (retarded) time, $t$ the position along the cable, and periodic functions $d(\cdot)$ and $c(\cdot)$ the dispersion and the fiber loss/amplification along the cable respectively.

The original evolution of optical pulses in a dispersion managed system with lumped amplification is described by the nonlinear Schr\"odinger equation
\beq\label{eq:original}
i\partial _t E+d(t) \partial_x^2 E+ |E |^2 E = i g(t)E. \notag
\eeq
The fiber loss and  amplification coefficient
along the cable is given by
\[
g(t)= -\f{\Gamma}{2}+\Gamma \sum_{j=1}^\infty \delta (t-t_j),
\]
where $\Gamma >0$ is the fiber loss, $t_j$ corresponds to the location of amplifiers, and $\delta(\cdot)$ is the Dirac delta function. For more information on this equation, see, e.g., \cite{Ag}. Taking
\[
E(x,t)= u(x,t)\exp\left(\int _0 ^ t g(t')dt'\right),
\]
we obtain equation \eqref{eq:intro}
with
$c(t)= \exp \left(2\int_0 ^t g(t') dt'\right)$
provided that $g$ is a periodic function with mean zero.

The dispersion management with alternating sections of positive and negative dispersion in fibers was introduced in 1980, see  \cite{LKC80}. It was successful to transfer the data at ultra--high speed over long distances, see, e.g., \cite{AB98,CGTD93,GT96a,GT96b,KH97,T03}.
For more information on the dispersion management, see \cite{TBF} and references therein.


In the strong dispersion management regime, the dispersion is given by
\beq\label{eq:d(t)}
d(t)= \dav +\f{1}{\veps}d_0 \left(\f{t}{\veps}\right),  \notag
\eeq
where $d_0(\cdot)$ is the mean zero part of the dispersion which is a $2-$periodic function satisfying
 $$
 d_0(t) = \left\{
            \begin{array}{ll}
              \phantom {-}1, & 0\leq t < 1, \\
              -1, & 1\leq t <2,
            \end{array}
          \right.
 $$
$\dav \in \R$ the average dispersion over one period, and $\veps>0$ a small parameter. The fiber loss and amplification is defined to be
\[
c(t)= G \left(\f{t}{\veps}\right),
\]
where $G$ is also a $2-$periodic function given by
\beq\label{eq:G(t)}
G(t)= \exp \Bigl(2 \int_{0}^{t} \Bigl(-\f{\Gamma}{2}+\Gamma \sum_{j \in \Z}\delta (t'-2j)\Bigr)\,dt' \Bigr). \notag
\eeq

The first main result of this paper is the well--posedness of the Cauchy problem
\beq \label{eq:intro_main}
\begin{cases}
   i\partial_t u +\Bigl(\dav +\f{1}{\veps}d_0 \left(\f{t}{\veps}\right)\Bigr) \partial_x^2 u +
   G\left(\f{t}{\veps}\right)|u|^2 u = 0,\\
   u(x,0)=u_0(x).
\end{cases}
\eeq

\begin{theorem}[Global well--posedness]\label{thm:globalwellposedness}
Let $\dav\in \R$. For every $u_0 \in H^1(\R)$, there exists a unique solution $u\in \calC(\R, H^1(\R))$ of \eqref{eq:intro_main}. Moreover, $u$ depends continuously on the initial datum in the following sense. For every $M>0$, the map $u_0 \mapsto u(t)$ from $H^1(\R)$ to $\calC([-M,M], H^1(\R))$  is locally Lipschitz continuous.
\end{theorem}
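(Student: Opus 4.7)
For fixed $\veps > 0$ the dispersion $d(t) = \dav + \veps^{-1} d_0(t/\veps)$ is piecewise constant, taking the values $\alpha_{\pm} := \dav \pm \veps^{-1}$ on alternating intervals of length $\veps$ with breakpoints on $\veps\Z$. I would define the linear propagator $U_\veps(t,s)$ for the dispersion problem $i\partial_t v + d(t)\partial_x^2 v = 0$ by concatenating the free Schr\"odinger groups $e^{i\alpha_{\pm}(t-s)\partial_x^2}$ across these subintervals; the family $U_\veps(t,s)$ is unitary on $L^2(\R)$ and an isometry on $H^1(\R)$. Standard Strichartz estimates for each constant-dispersion factor, patched across the finitely many breakpoints lying in any compact interval $[-M,M]$, produce Strichartz estimates for $U_\veps$ on $[-M,M]$ with constants depending on $M$ and $\veps$. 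The Cauchy problem \eqref{eq:intro_main} then becomes equivalent to the Duhamel equation
\[
u(t) = U_\veps(t, 0)\, u_0 + i \int_0^t U_\veps(t, s)\, G(s/\veps)\, |u(s)|^2 u(s)\, ds.
\]

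\textbf{Local well-posedness.} I would run a contraction argument in a ball of $\calC([-T, T], H^1(\R)) \cap L^q([-T,T], W^{1,r}(\R))$ for a Schr\"odinger-admissible pair $(q, r)$ (e.g.\ $(6,6)$ in one dimension), controlling the cubic term by Strichartz, H\"older, and the 1D Sobolev embedding $H^1 \hookrightarrow L^\infty$; boundedness of $G$ absorbs the time-dependent coefficient. Because the cubic nonlinearity is $L^2$-subcritical in one dimension, the local time of existence $T$ can be taken to depend only on $\|u_0\|_{L^2}$.

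\textbf{Global existence and continuous dependence.} Multiplying \eqref{eq:intro_main} by $\bar u$ and taking imaginary parts yields conservation of $\|u(t)\|_{L^2}$. Combined with the previous step, mass conservation lets me iterate the local construction indefinitely, producing a solution on all of $\R$. A finite a priori $H^1$ bound on $[-M, M]$ (depending on $\|u_0\|_{H^1}$, $M$ and $\veps$) follows from the Strichartz--Duhamel estimate applied successively on each of the finitely many short subintervals covering $[-M, M]$. Locally Lipschitz dependence on $u_0$ in $\calC([-M, M], H^1)$ is the usual byproduct of the contraction argument applied to the difference of two solutions, iterated across those subintervals.

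\textbf{Main obstacle.} The one wrinkle specific to this setup is the possible vanishing $\alpha_\pm = 0$ in the exceptional case $|\dav| = \veps^{-1}$: on such a subinterval the corresponding Schr\"odinger factor degenerates to the identity and no dispersive smoothing is available. This is circumvented by observing that on such an interval the PDE reduces pointwise in $x$ to the ODE $i\partial_t u = - G(t/\veps)|u|^2 u$, whose explicit solution
\[
u(t, x) = u(t_j, x)\exp\Bigl(i\, |u(t_j, x)|^2 \int_{t_j}^t G(s/\veps)\, ds\Bigr)
\]
preserves $\|u\|_{L^2}$, remains in $H^1$ thanks to $H^1(\R) \hookrightarrow L^\infty(\R)$, and is Lipschitz in the initial datum. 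Inserting this explicit evolution at the degenerate steps of the concatenation allows the remaining steps to go through without change.
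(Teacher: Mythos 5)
Your proposal is correct and follows the same overall strategy as the paper: a propagator built by concatenation across the breakpoints, local theory plus mass conservation, $L^2$--subcritical Strichartz estimates on the subintervals where the dispersion coefficient is nonzero, and an explicit ODE solution on the degenerate subintervals where it vanishes (the exceptional case $|\dav|=\veps^{-1}$, i.e.\ $\dav=\pm1$ after the rescaling to $\veps=1$). The differences are in execution rather than substance. The paper runs the $H^1$ local theory with no Strichartz input at all, by twisting with the free flow $T_{D(t)}$ and using that $H^1(\R)$ is an algebra; Strichartz estimates enter only through a separate global $L^2$ theory (Proposition \ref{prop:L^2theory}), and the $H^1$ a priori bound on compacts is then extracted from the identity \eqref{eq:formula} for $\frac{d}{dt}\|\partial_x u(t)\|_{L^2}^2$ together with Gronwall and the $L^4_t L^\infty_x$ Strichartz bound. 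Your route instead closes the contraction in $H^1$--valued Strichartz spaces and propagates the $H^1$ bound by iterating the $W^{1,r}$ Duhamel estimate over short subintervals whose length depends only on $\|u_0\|_{L^2}$; this is the standard persistence-of-regularity mechanism, equivalent in substance, and it has the small advantage of sidestepping the justification of \eqref{eq:formula} (the paper needs a twisting argument there because $\langle \partial_x u, \partial_x^3 u\rangle$ is not defined for $u\in H^1$). Two details you correctly anticipated deserve emphasis: the dispersive estimate for the concatenated propagator holds only for $t,t_0$ in the same subinterval (Lemma \ref{lem:dispersivelemma}), since $\int_{t_0}^{t} d(s)\,ds$ can vanish across a breakpoint where $d$ changes sign, so the Strichartz constants necessarily degrade with the number of breakpoints in $[-M,M]$ --- which is harmless for the statement; and your exponent $|u(t_j,x)|^2$ in the explicit solution of the degenerate ODE is the correct one.
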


Now we change the variables $u=T_{D(t/\veps)}v$ in \eqref{eq:intro_main} to obtain
\beq\label{eq:intro transformed equation}
\begin{cases}
   i\partial_t v + \dav \partial_x^2 v+ G\left(\f{t}{\veps}\right)T_{D(t/\veps)}^{-1}\Bigl(|T_{D(t/\veps)}v|^2T_{D(t/\veps)}v\Bigr)=0,\\
   v(x,0)=u_0(x),
\end{cases}
\eeq
where $D(t)=\int_0^t d_0(t')dt'$ and $T_t$ is the solution operator for the free Schr\"{o}dinger equation in dimension one.
Note that
since $d_0$ is a $2-$periodic function with mean zero, $D$ is also $2-$periodic and, therefore, the map $t \mapsto T_{D(t/\veps)}$ is $2\veps-$periodic.
\\
For small $\veps>0$, that is, in the regime of strong dispersion management, equation \eqref{eq:intro transformed equation} contains the fast oscillating terms $T_{D(t/\veps)}$ and $G(t/\veps) $ in the nonlinearity and hence Gabitov and Turitsyn suggested averaging the equation over one period, see \cite{GT96a, GT96b}. This yields
the following ``averaged" equation
\begin{equation}
\begin{aligned}
i\partial_t v + \dav \partial_{x}^2 v+ \f{1}{2}\int_0^2 G(\tau) T_{D(\tau)}^{-1} \Bigl(|T_{D(\tau)} v|^2 T_{D(\tau)} v\Bigr)d\tau=0. \notag
\end{aligned}
\end{equation}
We make the change of variables $D(\tau)=r$, then we have
\beq\label{eq:intro averaged}
i\partial_t v+ \dav \partial_{x}^2 v +  \int_0^1    T_{r}^{-1} \Bigl(|T_r v|^2 T_r v\Bigr) \psi(r)dr=0,
\eeq
where
$$
\psi(r)=e^{-\Gamma}\cosh \Gamma(r-1).
$$
The well--posedness of the averaged equation \eqref{eq:intro averaged} in $H^s(\R)$ for all $s \ge 0$ is proved in \cite{AK} for a general dispersion profile. For more general nonlinearities including saturated nonlinearities, see  \cite{CHLW}.

\medskip
The averaging procedure is rigorously justified in \cite{ZGJT01} when the fiber loss and amplification are not present. More precisely, it is shown that for $\veps>0$, the solutions of \eqref{eq:intro transformed equation} and \eqref{eq:intro averaged} with the same initial datum in $H^s(\R)$ stay $\veps$--close in $H^{s-3}(\R)$ for a long time in $O(\veps^{-1})$  when $s$ is sufficiently large. Note that the convergence is not shown in $H^s(\R)$ where the solutions exist. However, we prove that the solutions for \eqref{eq:intro transformed equation} converge to the solution for \eqref{eq:intro averaged} in $H^1(\R)$, where the solutions exist, as $\veps \to 0$.

{
\begin{theorem}[Averaging Theorem]\label{thm:intro averaging}
Let $\dav\in \R$, $M>0$ and  $ v \in \calC(\R, H^1(\R))$ be the solution of the averaged equation \eqref{eq:intro averaged} with the initial datum $ v_0\in H^1(\R)$.
Then there exist $C>0$ and $\veps_0>0$ such that if $0< \veps \leq \veps_0$ and $\|u_0-v_0\|_{H^1(\R)} \leq \veps$,
then
\beq \label{ineq:averaging thm}
\|v_\veps - v\|_{\calC([-M, M],H^{1}(\R))}\leq C \veps,
\eeq
where $v_\veps$ is the solution of \eqref{eq:intro transformed equation} with the initial datum $u_0\in H^1(\R)$
\end{theorem}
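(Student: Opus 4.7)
The plan is to work in mild form, subtract the two equations, split the nonlinear contribution into a Lipschitz--in--$w_\veps$ piece and an ``averaging error,'' show the latter is $O(\veps)$ in $H^1$, and close by a Gronwall argument. A preliminary step will be to extract from the proof of Theorem~\ref{thm:globalwellposedness} a uniform bound $\|v_\veps\|_{L^\infty([-M,M],H^1(\R))} \leq C_M$ independent of $\veps$: mass is conserved exactly and the time-dependent Hamiltonian can be controlled via the $1$D Gagliardo--Nirenberg inequality, since $G$ and $T_{D(\cdot)}$ stay uniformly bounded on $L^2$.

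With $S(t) := e^{it\dav \partial_x^2}$, $N(\tau, w) := G(\tau) T_{D(\tau)}^{-1}[|T_{D(\tau)}w|^2 T_{D(\tau)}w]$, $\bar N(w) := \tfrac{1}{2}\int_0^2 N(\tau, w)\,d\tau$, and $\Phi(\tau, w) := N(\tau, w) - \bar N(w)$, I would write, for $w_\veps := v_\veps - v$, the mild form
\[ w_\veps(t) = S(t)(u_0 - v_0) + i\int_0^t S(t-s)\bigl[N(s/\veps, v_\veps) - N(s/\veps, v)\bigr]\,ds + i\int_0^t S(t-s)\,\Phi(s/\veps, v(s))\,ds. \]
The first integrand is locally Lipschitz in $w_\veps$ in $H^1$, uniformly in $s$ and $\veps$, by the $H^1(\R)$-algebra property and the unitarity of $T_{D(\tau)}$. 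For the averaging term, I would introduce the $2$-periodic primitive $\Psi(\tau, w) := \int_0^\tau \Phi(\tau', w)\,d\tau'$ and rewrite $\Phi(s/\veps, v(s)) = \partial_s[\veps\Psi(s/\veps, v(s))] - \veps (D_2\Psi)(s/\veps, v)\,\partial_s v$. Integrating by parts in $s$ against $S(t-s)$ produces boundary contributions of size $\veps$ (since $\Psi$ is bounded in $H^1$ by a cubic function of $\|v\|_{H^1}$) plus two volume terms; substituting $\partial_s v = i\dav\partial_x^2 v + i\bar N(v)$ from the averaged equation \eqref{eq:intro averaged} and using the pointwise chain-rule identity
\[ \partial_x^2 \Psi(\tau, v) - (D_2\Psi)(\tau, v)\,\partial_x^2 v = (D_2^2\Psi)(\tau, v)(\partial_x v, \partial_x v), \]
the two $\partial_x^2$-contributions merge into an $\veps$-multiple of a ``curvature'' term bilinear in $\partial_x v$, the remaining $\veps$-multiple of $(D_2\Psi)(\tau, v)\,\bar N(v)$ being manifestly controlled in $H^1$. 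Once the curvature term is estimated by $C\veps$ in $H^1$, Gronwall's inequality on $[-M, M]$ delivers \eqref{ineq:averaging thm}.

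The hard part will be bounding the curvature term $\veps (D_2^2\Psi)(\tau, v)(\partial_x v, \partial_x v)$ in $H^1$: its schematic form is $|T_{D(\tau)}\partial_x v|^2 \cdot T_{D(\tau)}v$, and with $v$ only in $H^1(\R)$ this lies in $L^1(\R)$ but not naturally in $L^2(\R)$, let alone in $H^1$. I would try to combine the $1$D Sobolev embedding $H^1 \hookrightarrow L^\infty$ with Strichartz or local-smoothing estimates for $S$ -- so that the $S(t-s)$-prefactor recovers the missing regularity after integration in $s$ -- or alternatively to carry out a further integration by parts in $x$ recasting the curvature as the $x$-derivative of an $L^2$ quantity, after which the $H^1$-bound follows by duality against a free-Schr\"odinger test function.
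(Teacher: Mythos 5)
Your overall strategy is the same as the paper's (mild formulation, splitting the nonlinear difference into a Lipschitz piece and an averaging error, an integration by parts in $s$ that exploits the mean--zero periodicity of $Q_\veps-\la Q\ra$, and Gronwall), but two steps that you treat as routine are in fact the crux, and as proposed they do not go through.

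First, the ``preliminary step.'' You claim a bound $\|v_\veps\|_{\calC([-M,M],H^1)}\le C_M$ uniform in $\veps$ from mass conservation plus control of the Hamiltonian. The Hamiltonian is \emph{not} conserved here: by \eqref{energy derivative} (see Remark \ref{rem:blowup}) its derivative is $-\tfrac14\,\tfrac1\veps\,G'(t/\veps)\int|T_{D(t/\veps)}v_\veps|^4dx$, whose total variation over $[0,M]$ is of order $M/\veps$, so no $\veps$--independent energy bound is available without already exploiting cancellation over the fast period; moreover for $\dav\le 0$ the Hamiltonian does not control $\|\partial_x v_\veps\|_{L^2}$ in any case. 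The paper never proves such an a priori bound. Instead it works on consecutive windows of length $M_+=M_+(K)$, the $\veps$--independent local existence time of Proposition \ref{prop:H^slocal theory}, and uses the \emph{conclusion} $\|v_\veps-v\|\le C\veps$ on each window (for $\veps\le\veps_0$ small) to verify $\|v_\veps(jM_+)\|_{H^1}\le K$ and restart; the uniform bound on $v_\veps$ over $[0,M]$ is an output of this bootstrap, not an input. Without it, your single Gronwall argument on $[-M,M]$ has no uniform constant to close with.

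Second, the ``curvature term.'' Your physical--space integration by parts is exactly the Fourier--side argument of Lemma \ref{lemma:averaging} (there the primitive is $B_\veps=\int_0^tA_\veps$ with $\|B_\veps\|_{L^\infty}\le 4\veps$), and the term $(D_2^2\Psi)(v)(\partial_x v,\partial_x v)\sim T_{D(\tau)}^{-1}\bigl(|T_{D(\tau)}\partial_x v|^2\,T_{D(\tau)}v\bigr)$ is the paper's $\calI_3$; you also pick up the paper's $\calI_2$ through $\partial_s v$, which costs $\|\partial_x^2 v\|_{H^1}$. For $v$ merely in $H^1(\R)$ these terms are only $L^1_x$, and neither of your proposed rescues (Strichartz/local smoothing, or recasting as an exact $x$--derivative) is carried out or likely to yield an $H^1$ bound. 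The paper's resolution is a density reduction: the Duhamel map $v\mapsto\int_0^\cdot e^{i\dav(\cdot-t')\partial_x^2}Q_\veps(v)(t')\,dt'$ is Lipschitz on $\calC([0,M],H^1)$ \emph{uniformly in $\veps$} (estimate \eqref{ineq:density}), so the averaging estimate only needs to be proved for $v\in\calC^1([0,M],\calS(\R))$, where $\|\partial_t v\|_{H^1}$ and $\|\partial_x^2 v\|_{H^1}$ are finite. (Strictly, this yields convergence rather than the rate $C\veps$ for bare $H^1$ data; the quantitative rate uses extra regularity of the averaged solution, a point worth making explicit however you proceed.) You should either adopt this density reduction or supply a genuine estimate for the curvature term; as written, the $O(\veps)$ bound on the averaging error is not established.
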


In our main theorems, Theorems \ref{thm:globalwellposedness} and \ref{thm:intro averaging}, we prove the well--posedness of the Cauchy problem \eqref{eq:intro_main} and the validity of the averaging process in the strong dispersion, $\f 1\veps d_0(\f t\veps)$, while the well--posedness of the averaged equation is already proved in \cite{AK}. There are analogous results for the fast dispersion,  $d_0(\f t\veps)$, and the random dispersion, $\f 1\veps d_0(\f t{\veps^2})$, with some centered stationary random process $d_0$, in \cite{ASS} and \cite{BD}, respectively.}

 We here remark some provable facts which are not dealt with in this paper.
Related to standing wave solutions $v(x,t)=e^{i\omega t}f(x)$, $\omega \in \R$, of the averaged equation \eqref{eq:intro averaged}, a constrained minimization problem is well studied. The existence of minimizers can be found in \cite{CHLT,HL} when $\dav\geq 0$. One can easily show that every minimizer is a weak solution of the corresponding Euler--Lagrange equation. Each weak solution and its Fourier transform decay exponentially, which can be proven by modifying the proofs in \cite{EHL,GH} a little. Particularly, every minimizer is smooth.
Moreover, the set of ground states is orbitally stable, see \cite{CHLW,HKS} as well as \cite{CL}. In the case $\dav>0$, using the averaging theorem and the orbital stability, it is possible to obtain the stable soliton--like solution for \eqref{eq:intro_main}.

\medskip

The paper is organized as follows.
In Section \ref{sec:well-posedness}, we prove Theorem \ref{thm:globalwellposedness}, the global well--posedness result in $H^1(\R)$. We start by showing the local well--posedness in $H^1(\R)$ and the global existence in $L^2(\R)$. Although we do not have the energy conservation, we prove the existence of a global solution in $H^1(\R)$ based on the mass conservation and the boundedness of the mixed norm of the solution for Strichartz admissible pairs. In Section \ref{sec:Averaging theorem}, we prove Theorem \ref{thm:intro averaging}, the averaging theorem.
In Appendix \ref{sec:appendix}, we gather basic properties of the free Schr\"odinger time evolution to prove Theorem \ref{thm:globalwellposedness}.

\section{Well--posedness}\label{sec:well-posedness}

To begin with, let us introduce some notations. For $1\leq p <\infty$, we use $L^p(\R)$ to denote the Banach space of functions $f$ whose norm
  \[
  \|f\|_{L^p} :=\left(\int _\R |f(x)|^p dx\right)^{\f1p}
  \]
is finite with the essential supremum instead when $p=\infty$.
  The space $L^2(\R)$ is a Hilbert space with scalar product given by $\la f,g\ra=\int_\R f(x) \overline{g(x)} dx$.
We use $L_t^q(J, L_x^p(I))$ to denote, for $1 \le p, q < \infty$ and intervals $I, J\subset \R$, the Banach space of all functions $u$ with the mixed norm
$$
\|u\|_{L_t^q(J, L_x^p(I))}:=\left(\int _J \left(\int _I |u(x,t)|^p dx\right)^{\f{q}{p}}dt\right)^{\f{1}{q}}\ .
$$
If $p=\infty$ or $q=\infty$, use the usual modification. For notational simplicity, we use $L^q(J,  L^p)$ for $L_t^q(J, L_x^p(\R))$. We say that $u \in L^q_{\loc}(J,  L^p)$ when $u \in L^q(\widetilde{J},  L^p)$ for every bounded interval $\widetilde{J} \subset J$.

The Fourier transform on $\R$ is defined by
\[
	\hatt{f}(\xi):=\f{1}{\sqrt{2\pi}}\int_\R e^{-ix\xi}f(x)\, dx
\]
for $f\in \calS(\R)$, the Schwartz space of infinitely smooth, rapidly decreasing functions.
For $s\in \R$, the Sobolev space $H^s(\R)$ is defined as the space of all tempered distributions $f\in S'(\R)$ for which
\[
\|f\|_{H^s}:=\left(\int_\R (1+\xi^2)^s |\hatt{f}(\xi)|^2 d\xi\right)^{1/2}< \infty.
\]

For a Banach space $X$ with norm $\|\cdot\|_X$ and an interval $J$, $C(J, X)$ is the space of all continuous functions $u:J \to X$. When $J$ is compact, it is a Banach space with norm
$$
\|u\|_{C(J, X)}=\sup _{t\in J}\|u(t)\|_{X}
$$
and $C^1(J, X)$ is the Banach space of all continuously differentiable functions $u:J \to X$.

Let $T_t$  denote the solution operator for the free Schr\"{o}dinger equation in spatial dimension one.
In terms of the Fourier transform, this is given by
\[
\widehat{ T_t f}(\xi)=\widehat{ e^{it\partial_x^2}f}(\xi)=e^{-it\xi^2} \hatt f(\xi)
\]
for $f\in \calS(\R)$,
 thus, one can express
\[
T_t f(x)=e^{it \partial_x^2}f(x)= \f{1}{\sqrt{2\pi}} \int _\R e^{ix\xi} e^{-i t\xi^2  }\widehat{f}(\xi)d\xi.
\]
 It is a unitary operator on $L^2(\R)$ and also on $H^1(\R)$. Therefore, for every $t\in \R$,
$$
\|T_t f\|_{L^2}=\|f\|_{L^2} \quad \mbox{and} \quad \|T_t f\|_{H^1}=\|f\|_{H^1}.
$$
We use the notation $f \lesssim g$ when there exists a positive constant $C$ such that $f \le Cg$.

\medskip
Now we prove the well--posedness of the Cauchy problem \eqref{eq:intro_main}.
Since the proof does not rely on the factor $\veps$ in \eqref{eq:intro_main}, we only consider the case $\veps=1$,
\beq\label{eq:mainCauchy}
\begin{cases}
   \displaystyle{i\partial_t u+ \bigl(\dav+d_0(t)\bigr) \partial_x^2u+ G(t)|u|^2u=0},\\
   u(x,0)=u_0(x),
\end{cases}
\eeq
or, equivalently,
\beq\label{eq:transformedCauchy}
\begin{cases}
   i\partial_t v + \dav \partial_x^2 v+ G(t) T_{D(t)}^{-1}\Bigl(|T_{D(t)}v|^2T_{D(t)}v\Bigr)=0,\\
   v(x,0)=u_0(x),
\end{cases}
\eeq
where $u=T_{D(t)}v$.
We first prove the local existence of a unique solution for the integral equation of \eqref{eq:transformedCauchy},
\beq \label{eq:duhamel fomula}
v(t)=e^{it \dav \partial_x^2} u_0 + i \int _0 ^t e^{i (t-t')\dav \partial_x^2} Q(v(t'))dt',
\eeq
with  $u_0 \in H^1(\R)$, where
$$
 Q(v)(t)= G(t)T_{D(t)}^{-1}\Bigl(|T_{D(t)}v(t)|^2T_{D(t)}v(t)\Bigr).
$$
Here and below, we use $C$ to denote various constants.

\begin{lemma}\label{lem:Q}
For every $f, g\in H^1(\R)$, we have
\beq\label{ineq:bounded}
\|Q(f)(t)\|_{H^1}\lesssim \|G\|_{L^\infty}\|f\|^3_{H^1}
\eeq
and
\beq \label{ineq:difference}
\|Q(f)(t)-Q(g)(t)\|_{H^1}\lesssim \|G\|_{L^\infty} (\|f\|^2_{H^1}+\|g\|^2_{H^1})\|f-g\|_{H^1}.\eeq
\end{lemma}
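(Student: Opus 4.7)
The plan is to exploit two simple structural facts: $T_{D(t)}$ (and its inverse) is a unitary operator on both $L^2(\R)$ and $H^1(\R)$, so it can be stripped off without cost; and $H^1(\R)$ is a Banach algebra in dimension one because $H^1(\R)\hookrightarrow L^\infty(\R)$. After that, both bounds reduce to pointwise-in-time estimates of a cubic nonlinearity in $H^1$.

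For \eqref{ineq:bounded}, I would set $F := T_{D(t)} f$ and use unitarity of $T_{D(t)}^{-1}$ on $H^1$ to write
\[
\|Q(f)(t)\|_{H^1} \;=\; |G(t)|\,\bigl\| |F|^2 F \bigr\|_{H^1} \;\le\; \|G\|_{L^\infty}\bigl\| |F|^2 F \bigr\|_{H^1}.
\]
Then I would bound the $L^2$ part by $\||F|^2 F\|_{L^2}\le \|F\|_{L^\infty}^2\|F\|_{L^2}$ and the derivative part by the product rule, $\partial_x(|F|^2 F) = 2|F|^2 \partial_x F + F^2\, \overline{\partial_x F}$, which gives $\|\partial_x(|F|^2 F)\|_{L^2}\lesssim \|F\|_{L^\infty}^2\|\partial_x F\|_{L^2}$. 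Sobolev embedding yields $\|F\|_{L^\infty}\lesssim \|F\|_{H^1}$, hence $\||F|^2 F\|_{H^1}\lesssim \|F\|_{H^1}^3$. Finally unitarity of $T_{D(t)}$ on $H^1$ gives $\|F\|_{H^1}=\|f\|_{H^1}$, proving \eqref{ineq:bounded}.

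For \eqref{ineq:difference}, I would again absorb $T_{D(t)}^{-1}$ via unitarity and, writing $F:=T_{D(t)} f$ and $H:=T_{D(t)} g$, use the algebraic identity
\[
|F|^2 F - |H|^2 H \;=\; F^2\,\overline{(F-H)} \;+\; F\,\bar H\,(F-H) \;+\; |H|^2\,(F-H),
\]
which is obtained from $|F|^2-|H|^2 = F\,\overline{(F-H)}+\bar H\,(F-H)$. Each summand has the shape (product of two of $F$ or $H$) times $(F-H)$ or its conjugate, so applying the same product-rule plus Sobolev-embedding argument term by term gives
\[
\bigl\||F|^2 F - |H|^2 H\bigr\|_{H^1} \;\lesssim\; \bigl(\|F\|_{H^1}^2+\|H\|_{H^1}^2\bigr)\,\|F-H\|_{H^1}.
\]
Multiplying by $|G(t)|\le \|G\|_{L^\infty}$ and once more using unitarity of $T_{D(t)}$ on $H^1$ to replace $F,H,F-H$ by $f,g,f-g$ yields \eqref{ineq:difference}.

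The argument is essentially routine; the only slightly delicate point is the algebraic identity for $|F|^2 F - |H|^2 H$ and keeping the complex conjugates straight so that every term admits a clean Leibniz estimate. No cancellation or refined Strichartz input is needed here — the whole lemma is pointwise in $t$, and the factor $G(t)$ plays no role beyond its supremum norm.
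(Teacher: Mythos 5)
Your proof is correct and follows essentially the same route as the paper: strip off $T_{D(t)}$ by unitarity on $H^1$, use the Banach algebra property of $H^1(\R)$ (which you verify by hand via Leibniz plus Sobolev embedding, while the paper simply cites $\|fg\|_{H^1}\lesssim\|f\|_{H^1}\|g\|_{H^1}$), and handle the difference by a telescoping decomposition of the cubic term (the paper phrases this through the multilinear form $Q(f_1,f_2,f_3)$, but the three resulting terms are the same kind as yours). No gaps.
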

\begin{proof}
We define, for every $f_1, f_2, f_3\in H^1(\R)$,
\[
Q(f_1, f_2, f_3)(t):= G(t)T_{D(t)}^{-1}\Bigl(T_{D(t)}f_1 \overline{T_{D(t)}f_2}T_{D(t)}f_3\Bigr),
\]
which is multi--linear. Note that $Q(f)(t)=Q(f,f,f)(t)$ for any $f \in H^1(\R)$.
Since
\[
\|fg\|_{H^1}\leq C \|f\|_{H^1}\|g\|_{H^1}
\]
and $T_{D(s)}$ is unitary on $H^1(\R)$, we obtain
\beq\label{ineq:Q}
\|Q(f_1, f_2, f_3)(t)\|_{H^1}\lesssim \|G\|_{L^\infty}\|f_1\|_{H^1}\|f_2\|_{H^1}\|f_3\|_{H^1}
\eeq
which proves \eqref{ineq:bounded}.
Observing
\[
Q(f)(t)-Q(g)(t)=Q(f-g, f,f)(t)+Q(g, f-g, f)(t)+ Q(g, g, f-g)(t),
\]
one can easily obtain \eqref{ineq:difference} from \eqref{ineq:Q}.
\end{proof}

\begin{proposition}\label{prop:H^slocal theory}
Let $\dav\in \R$. For every $K>0$, there exist $M_\pm>0$ such that for every initial datum $u_0 \in H^1(\R)$ with $\|u_0\|_{H^1}\leq K$,  there is a unique solution $v\in \calC([-M_-, M_+], H^1)$ of \eqref{eq:duhamel fomula}.
Moreover,
\beq
\|v(t)\|_{ H^1}\leq 2 K  \quad \text{   for all  } t\in [-M_-, M_+]. \notag
\eeq

\end{proposition}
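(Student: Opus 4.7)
The plan is a standard Banach fixed-point argument applied to the Duhamel map
\[
\Phi(v)(t) := e^{it\dav\partial_x^2} u_0 + i\int_0^t e^{i(t-t')\dav\partial_x^2} Q(v(t'))\, dt'
\]
on the closed ball $B := \{v \in \calC([-M_-, M_+], H^1): \sup_{t\in [-M_-,M_+]} \|v(t)\|_{H^1} \le 2K\}$, for $M_\pm > 0$ to be chosen small depending on $K$ and $\|G\|_{L^\infty}$. The closed ball $B$ in the Banach space $\calC([-M_-,M_+], H^1)$ is complete, so a contraction on $B$ will produce the desired unique fixed point.

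First I would verify self-mapping. Since $e^{it\dav\partial_x^2}$ is unitary on $H^1$, the free evolution of $u_0$ contributes $K$ to the $H^1$-norm. For the Duhamel term, by Lemma \ref{lem:Q} and unitarity of the free propagator,
\[
\Bigl\|\int_0^t e^{i(t-t')\dav\partial_x^2} Q(v(t'))\, dt'\Bigr\|_{H^1} \leq \int_0^{|t|} \|Q(v(t'))\|_{H^1}\, dt' \leq C\|G\|_{L^\infty}(2K)^3 |t|,
\]
so choosing $M_\pm \leq (8C\|G\|_{L^\infty} K^2)^{-1}$ gives $\|\Phi(v)(t)\|_{H^1} \leq 2K$. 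For the contraction property, the same argument using \eqref{ineq:difference} of Lemma \ref{lem:Q} bounds the difference by $8C\|G\|_{L^\infty} K^2\max(M_-,M_+)\|v-w\|_{\calC}$, and shrinking $M_\pm$ further (say to enforce the constant to be at most $1/2$) yields a strict contraction.

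The remaining technical point is to confirm $\Phi(v) \in \calC([-M_-, M_+], H^1)$ whenever $v \in B$. Strong continuity of $t \mapsto e^{it\dav\partial_x^2}u_0$ on $H^1$ takes care of the linear part. For the Duhamel term, I would note that $t' \mapsto T_{D(t')} v(t')$ is continuous into $H^1$ (the map $t\mapsto T_{D(t)}$ is strongly continuous on $H^1$ since $D$ is continuous and $T_s$ is a strongly continuous unitary group, and $v \in \calC$), while $G \in L^\infty$. Combined with the algebra property of $H^1$ used in Lemma \ref{lem:Q}, this shows $Q(v(\cdot)) \in L^\infty_{\loc}([-M_-, M_+], H^1)$, hence the Bochner integral $\int_0^t e^{i(t-t')\dav\partial_x^2}Q(v(t'))\, dt'$ is continuous in $t$ into $H^1$.

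The main obstacle is conceptual rather than computational: Lemma \ref{lem:Q} is tailor-made for this argument, so the only genuine verification is the $H^1$-continuity of the Duhamel integrand in the presence of the discontinuous coefficient $G$. Since $G$ enters only as an $L^\infty$ multiplier, this causes no difficulty; the Bochner-integrability and continuity reduce to boundedness of the integrand, which Lemma \ref{lem:Q} supplies. The contraction constant and the size of $M_\pm$ are then forced by the cubic nature of $Q$, which is why $M_\pm$ depends on $K$ through $K^2$.
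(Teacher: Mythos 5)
Your fixed-point argument matches the paper's proof almost line for line: same ball of radius $2K$ in $\calC([-M_-,M_+],H^1)$, same use of Lemma \ref{lem:Q} for the self-mapping and contraction estimates, same choice of $M_\pm$ forced by the cubic nonlinearity. Your extra paragraph verifying that $\Phi(v)$ actually lands in $\calC([-M_-,M_+],H^1)$ (strong continuity of $t\mapsto T_{D(t)}$ plus $G\in L^\infty$) is a point the paper passes over silently, and it is worth making explicit.

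There is, however, one genuine omission. The contraction mapping theorem only gives you uniqueness of the fixed point \emph{within the ball} $B$, i.e.\ among solutions satisfying $\|v(t)\|_{H^1}\le 2K$. The proposition asserts uniqueness in all of $\calC([-M_-,M_+],H^1)$, and a priori there could be another solution of \eqref{eq:duhamel fomula} whose $H^1$-norm exceeds $2K$ somewhere on the interval. The paper closes this with a separate step: if $v_1,v_2\in\calC([0,M_+],H^1)$ both solve \eqref{eq:duhamel fomula}, then \eqref{ineq:difference} gives
\[
\|v_1(t)-v_2(t)\|_{H^1}\le C\bigl(\|v_1\|^2_{\calC([0,M_+],H^1)}+\|v_2\|^2_{\calC([0,M_+],H^1)}\bigr)\int_0^t\|v_1(t')-v_2(t')\|_{H^1}\,dt',
\]
and Gronwall's inequality forces $v_1=v_2$. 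You should append this (one-line) argument; without it your proof establishes existence plus conditional uniqueness only, which is strictly weaker than the statement.
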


\begin{corollary} \label{cor:maximal existence in H^1}
Let $\dav\in \R$. For any initial datum $u_0\in H^1(\R)$, there exist maximal life times $T_\pm\in (0, \infty]$ such that there is a unique solution $v\in \calC((-T_-, T_+), H^1)$ of \eqref{eq:duhamel fomula}.
Moreover, the blow--up alternative for solutions holds:
 if $T_+$ is finite,
  	then
 	\begin{equation}
 		\lim_{t\to T_+} \|v(t)\|_{H^1} =\infty  \notag
 	\end{equation}
   and if $T_-$ is finite,
   then
 	\begin{equation}
 		\lim_{t\to -T_-} \|v(t)\|_{H^1} =\infty . \notag
 	\end{equation}
\end{corollary}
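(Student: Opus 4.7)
The plan is to follow the standard extension argument from semilinear theory. First I would define
\[
T_+ := \sup\{T > 0 : \eqref{eq:duhamel fomula} \text{ admits a solution in } \calC([0,T], H^1(\R))\}
\]
and $T_-$ symmetrically; Proposition \ref{prop:H^slocal theory} forces $T_\pm > 0$. To assemble the local solutions into one maximal solution I need global uniqueness on overlaps. If $v_1, v_2 \in \calC([0,T], H^1)$ both solve \eqref{eq:duhamel fomula}, then using unitarity of $e^{it\dav \partial_x^2}$ on $H^1(\R)$ together with the Lipschitz bound \eqref{ineq:difference} of Lemma \ref{lem:Q},
\[
\|v_1(t) - v_2(t)\|_{H^1} \le C\|G\|_{L^\infty}\int_0^t \bigl(\|v_1(s)\|_{H^1}^2 + \|v_2(s)\|_{H^1}^2\bigr)\|v_1(s) - v_2(s)\|_{H^1}\,ds,
\]
and since the $H^1$ norms are continuous on $[0,T]$, Gronwall forces $v_1 \equiv v_2$. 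Consequently the local solutions fit together into a unique maximal solution $v \in \calC((-T_-, T_+), H^1(\R))$.

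Next I would establish the blow--up alternative at $T_+$ by contradiction. Assume $T_+ < \infty$ and $\liminf_{t \uparrow T_+} \|v(t)\|_{H^1} < \infty$; pick $K > 0$ and a sequence $t_n \uparrow T_+$ with $\|v(t_n)\|_{H^1} \le K$. The crucial observation is that Proposition \ref{prop:H^slocal theory} is established by a contraction argument whose only time dependence enters through $\|G\|_{L^\infty}$ in the multilinear estimates of Lemma \ref{lem:Q}. Hence the same Picard iteration applied to the Duhamel formula rebased at $t_n$ with data $v(t_n)$ yields a solution on $[t_n, t_n + M_+(K)]$ with $M_+(K) > 0$ independent of $n$. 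For $n$ so large that $T_+ - t_n < M_+(K)$, global uniqueness lets me glue this extension to $v$ and obtain a solution past $T_+$, contradicting maximality. Therefore $\|v(t)\|_{H^1} \to \infty$ as $t \uparrow T_+$, and the argument at $-T_-$ is entirely symmetric.

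The main (essentially notational) obstacle is verifying that Proposition \ref{prop:H^slocal theory} really does provide a lifespan $M_+(K)$ uniform in the starting time $t_n$, rather than only for the Cauchy problem based at $t = 0$. Since $G$ is bounded on $\R$ and the group property of the free Schr\"odinger evolution lets $e^{i(t - t_n)\dav \partial_x^2}$ replace $e^{it\dav \partial_x^2}$ without affecting any of the $H^1$ bounds, the fixed--point scheme carries over verbatim, and the only quantities controlling the existence time are $\|v(t_n)\|_{H^1}$ and $\|G\|_{L^\infty}$. With this uniformity in hand, the extension--contradiction step is routine.
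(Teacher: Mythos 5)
Your proposal is correct and follows essentially the same route as the paper: define $T_\pm$ as suprema of existence times, obtain uniqueness on overlaps from the Lipschitz estimate \eqref{ineq:difference} via Gronwall, and derive the blow--up alternative by contradiction, restarting the fixed--point scheme of Proposition \ref{prop:H^slocal theory} at a time $t_n$ close to $T_+$ with data of $H^1$ norm at most $K$ and a lifespan $M_+(K)$ depending only on $K$ and $\|G\|_{L^\infty}$. Your explicit remark that the local existence time is uniform in the base point $t_n$ (since the non--autonomous data $G$, $D$ enter the estimates only through $\|G\|_{L^\infty}$) is a point the paper leaves implicit, but it is the same argument.
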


\begin{proof}
[Proof of Proposition \ref{prop:H^slocal theory}]
Without loss of generality, we assume that $t>0$.

To prove the existence of a solution, we use a fixed point argument.
For each $M>0$ and $a>0$, let
\[
B_{M,a}=\{v\in \calC([0, M], H^1)  \; : \; \|v\|_{\calC([0, M],H^1)}\leq a\}
\]
be equipped with the distance
$$
d(v,w)=\|v-w\|_{\calC([0, M],H^1)}.
$$
Let $K>0$ and $u_0\in H^1(\R)$ with $\|u_0\|_{H^1}\leq K$ be fixed. Define the map $\Phi$ on $B_{M,a}$ by
\bdm
\Phi(v)(t)=e^{it \dav \partial_x^2 }u_0+i\int _0 ^t e^{i(t-t') \dav \partial_x^2 } Q(v)(t')dt'.
\edm
It follows from Lemma \ref{lem:Q} that if $v(t), w(t) \in H^1(\R)$, then
\[
\begin{aligned}
\|\Phi(v)(t)\|_{H^1}
& \leq \|u_0\|_{H^1} + \int _0 ^t  \|Q(v)(t')\|_{H^1}dt'
\leq \|u_0\|_{H^1} + C\int _0 ^M \|v(t')\|^3_{H^1}dt'
\end{aligned}
\]
and
\beq
\begin{aligned}\label{ineq:Lips}
\| \Phi(v)(t)-\Phi(w)(t)\|_{H^1}&\leq \int_0 ^t \|Q(v)(t')-Q(w)(t')\|_{H^1} dt' \\
&\leq C \int_0 ^M \left(\|v(t')\|_{H^1}^2 +\|w(t')\|_{H^1}^2\right)  \|v(t')-w(t')\|_{H^1}dt'
\end{aligned}
\eeq
for every $0\leq t \leq M $.
Therefore, there is a positive constant $C$ such that for all $v, w\in B_{M,a}$,
\[
\|\Phi(v)\|_{\calC([0,M], H^1)}\leq K +CM a^3
\]
and
\[
d(\Phi(v),\Phi(w))\leq CMa^2 d(v,w).
\]
Now set $a=2K$ and choose $M_+>0$ satisfying
\[
 CM_+(2K)^2 <  \f{1}{2},
\]
then we obtain that $\Phi$ is a contraction from $B_{M_+,2K}$ into itself.
Thus,
Banach's contraction mapping theorem shows that there exists a
unique solution $v$ of \eqref{eq:duhamel fomula} in $B_{M_+,2K}$ and, moreover,
\[
\|v\|_{\calC([0, M_+],H^1)}\leq 2K.
\]

To show the uniqueness of a solution,
let $v_1, v_2 \in \calC([0, M_+],H^1)$ be solutions of \eqref{eq:duhamel fomula}.
Then it follows from \eqref{ineq:difference} that for every $t \in [0,M_+]$
\beq\label{ineq:uniqueness}
\|v_1(t)-v_2(t)\|_{H^1}
\leq C \left(\|v_1\|^2_{\calC([0, M_+],H^1)}+\|v_2\|^2_{\calC([0, M_+],H^1)}\right) \int_0 ^t \|v_1(t')-v_2(t')\|_{H^1}dt'
\eeq
which implies $\|v_1-v_2\|_{\calC([0,M_+],H^1)} =0$.
\end{proof}

\begin{proof}[Proof of Corollary \ref{cor:maximal existence in H^1}]
Given the initial datum $0 \neq u_0 \in H^1(\R)$, let us define the maximal life time $T_+$ by
\begin{align*}
&T_+=\sup\left\{ M>0: \text{a unique solution of}\ \eqref{eq:duhamel fomula}\ \text{exists in}\ \calC([0, M],H^1) \right\}.
\end{align*}
Then it immediately follows from Proposition \ref{prop:H^slocal theory} that $T_+ \in (0, \infty]$.

Note that if a solution exists in $\calC([0, M],H^1)$ for any $M>0$, then it is a unique solution in $\calC([0, M],H^1)$, by the same argument in the proof of the uniqueness in Proposition \ref{prop:H^slocal theory}. Thus, there exists a unique solution $v\in \calC([0,T_+),H^1)  $  of \eqref{eq:duhamel fomula}. To prove the blow--up alternative, let $T_+ < \infty$. Suppose to the contrary that there exist a positive number $K$ and a sequence $\{t_j\}$ in $(0, T_+)$ such that $\|v(t_j)\|_{H^1}\leq K$ and $t_j \to T_+$  as $j \to \infty$. Then, by Proposition \ref{prop:H^slocal theory}, there exists $M>0$ such that a unique solution of  \eqref{eq:duhamel fomula} with the initial datum $v(t_j)$ exists in $\calC([t_j, t_j+M], H^1)$ for all $j$. Since we can choose $j^*$ such that $t_{j^*} + M >T_+$, this contradicts the definition of $T_+$. The case of $T_-$ can be done similarly.
\end{proof}

Next, we show the continuous dependence of the solutions for \eqref{eq:duhamel fomula} on the initial data to finish the local well--posedness. Indeed, the map $u_0 \mapsto v(t)$ is locally Lipschitz continuous on $H^1(\R)$.

\begin{proposition}\label{prop:continuousdependence}
Let $\dav\in \R$. For every $K>0$, there exists a positive constant $C$ such that for all initial data $ v_0, w_0 \in H^1(\R)$ with $\|v_0\|_{H^1}, \|w_0\|_{H^1}\leq K$,  we have
$$
\|v-w\|_{\calC([-M_-,M_+], H^1)}\leq e^{C \max(M_-, M_+)}\|v_0-w_0\|_{H^1},
$$
where $v$ and $w$ are the corresponding local solutions of \eqref{eq:duhamel fomula} with initial data $ v_0$, $w_0 $
on the time interval $[-M_-, M_+]$ of existence, guaranteed by Proposition \ref{prop:H^slocal theory}.
\end{proposition}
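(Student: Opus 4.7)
The plan is to subtract the Duhamel integral equations for $v$ and $w$, bound the difference in $H^1$ using the Lipschitz estimate \eqref{ineq:difference} of Lemma \ref{lem:Q}, and close via Gr\"onwall's inequality. The crucial ingredient that makes this work cleanly is the a priori bound $\|v(t)\|_{H^1},\|w(t)\|_{H^1}\leq 2K$ on $[-M_-,M_+]$ provided by Proposition \ref{prop:H^slocal theory}, which turns the cubic nonlinearity into a linear Gr\"onwall term with an explicit constant depending only on $K$ and $\|G\|_{L^\infty}$.

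By symmetry I treat $t\in[0,M_+]$; the case $t\in[-M_-,0]$ is identical after reversing the time variable. Since $v$ and $w$ both satisfy \eqref{eq:duhamel fomula}, I subtract to get
\[
v(t)-w(t)=e^{it\dav\partial_x^2}(v_0-w_0)+i\int_0^t e^{i(t-t')\dav\partial_x^2}\bigl(Q(v)(t')-Q(w)(t')\bigr)\,dt'.
\]
Taking $H^1$-norms and using that $e^{is\dav\partial_x^2}$ is unitary on $H^1(\R)$, together with the bound \eqref{ineq:difference}, I obtain
\[
\|v(t)-w(t)\|_{H^1}\leq \|v_0-w_0\|_{H^1}+C\|G\|_{L^\infty}\int_0^t\bigl(\|v(t')\|_{H^1}^2+\|w(t')\|_{H^1}^2\bigr)\|v(t')-w(t')\|_{H^1}\,dt'.
\]

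Next I invoke Proposition \ref{prop:H^slocal theory} to replace $\|v(t')\|_{H^1}^2+\|w(t')\|_{H^1}^2$ by $8K^2$, yielding a linear integral inequality
\[
\|v(t)-w(t)\|_{H^1}\leq \|v_0-w_0\|_{H^1}+\widetilde{C}\int_0^t\|v(t')-w(t')\|_{H^1}\,dt',\qquad t\in[0,M_+],
\]
with $\widetilde{C}=8CK^2\|G\|_{L^\infty}$. Gr\"onwall's inequality then gives $\|v(t)-w(t)\|_{H^1}\leq e^{\widetilde{C}t}\|v_0-w_0\|_{H^1}$ for all $t\in[0,M_+]$. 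Running the same argument for $t\in[-M_-,0]$ and taking the supremum over $[-M_-,M_+]$ produces the stated bound with constant $C=\widetilde{C}$ and the exponent $\max(M_-,M_+)$.

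I don't expect a real obstacle here: the multilinear Lipschitz estimate is already in hand, and the only conceptual point is recognizing that the cubic factors in $Q(v)-Q(w)$ are uniformly controlled on the interval of existence by the $2K$-bound from the contraction construction, which is precisely what allows the application of Gr\"onwall. If one wanted to be careful, one could invoke an integral form of Gr\"onwall directly (e.g.\ $\varphi(t)\leq a+b\int_0^t\varphi$ implies $\varphi(t)\leq ae^{bt}$) to avoid any measurability issue; since $\|v(\cdot)-w(\cdot)\|_{H^1}$ is continuous on $[-M_-,M_+]$, this is immediate.
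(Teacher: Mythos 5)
Your proposal is correct and follows essentially the same route as the paper's proof: subtract the Duhamel formulas, apply the unitarity of the free propagator and the Lipschitz estimate \eqref{ineq:difference}, use the $2K$ a priori bound from Proposition \ref{prop:H^slocal theory} to linearize the integral inequality, and conclude by Gr\"onwall. No gaps.
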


\begin{proof}
We consider positive $t$ only and fix $t \in (0,M_+]$.
From Proposition \ref{prop:H^slocal theory}, we know that
$$
\|v(t)\|_{H^1}\le 2K \quad\mbox{and}\quad \|w(t)\|_{H^1}\leq 2K .
$$
By a similar argument of \eqref{ineq:Lips}, we obtain
\begin{align*}
\|v(t)-w(t)\|_{H^1}
& \leq \|v_0-w_0\|_{H^1}+\int_0 ^t \|Q(v)(t')-Q(w)(t')\|_{H^1} dt'\\
& \leq \|v_0-w_0\|_{H^1}+C\int_0 ^t \Bigl(\|v(t')\|_{H^1}^{2}+\|w(t')\|_{H^1}^{2}\Bigr) \| v(t')-w(t')\| _{H^1} dt'\\
&\leq \|v_0 -w_0\|_{H^1} +CK^2 \int _0 ^t \| v(t')-w(t')\| _{H^1}dt'.
\end{align*}
Thus, it follows from Gronwall's inequality that
$$
\|v(t)-w(t)\|_{H^1}\leq e^{CK^2t} \|v_0-w_0\|_{H^1}\leq e^{CK^2M_+} \|v_0-w_0\|_{H^1},
$$
which completes the proof.
\end{proof}

\begin{remark}\label{rem:blowup}
     If we define the energy $E(v(t))$ of the solution $v$ for \eqref{eq:transformedCauchy}  by
\[
E(v(t)) =\f{\dav}{2} \|\partial_x v(t)\|_{L^2}^2 - \f{G(t)}{4}\int _\R  |T_{D(t)}v(t)|^4 dx,
\]
then, however, the energy is neither conserved nor decreasing.
Indeed, its derivative is given by
\beq \label{energy derivative}
\f{dE(v(t)) }{dt} = - \f{1}{4}G'(t)\int _\R  |T_{D(t)}v(t)|^4 dx
\eeq
for all $t \in \R \setminus 2\Z $. Note that $E(v(t))$ is not differentiable nor continuous at $t \in 2\Z$.\\
If there is no fiber loss nor amplification, i.e., $G \equiv 1$, then $T_\pm=\infty$  by the conservation
of energy and the blow--up alternative, which immediately gives the global well--posedness. However, as you see in \eqref{energy derivative}, the energy is no longer conserved in our case.
\end{remark}

\medskip

Now we consider the integral form of the Cauchy problem \eqref{eq:mainCauchy}
\beq \label{eq:duhamel fomula for o}
u(t)=U(0,t)u_0 + i \int _{0} ^t U(t',t)\Bigl( G(t')|u(t')|^2u(t')\Bigr) dt'.
\eeq
Here and below, $U(0,t)$ is the solution operator for the linear Schr\"odinger equation associated with \eqref{eq:mainCauchy}, i.e., for every $f\in L^2(\R)$, $U(0,t)f$ solves the initial value problem
\[
\begin{cases}
   i\partial_t w + \bigl(\dav+d_0(t)\bigr)\partial_x^2 w =0,\\
   w(x,0)=f(x).
\end{cases}
\]
Next we define, for all $t_0, t \in \R$,
\beq\label{eq:linear-propagator}
U(t_0,t):= U(0,t)\bigl(U(0,t_0)\bigr)^{-1}
\eeq
on $L^2(\R)$.
Then $U(t_0,t)$ is unitary on $L^2(\R)$ and also on $H^1(\R)$, and therefore, for every $t, t_0\in \R$,
\beq \label{eq:unitary}
\|U(t_0,t)f\|_{L^2}=\|f\|_{L^2} \quad \text{and} \quad \|U(t_0,t)f\|_{H^1}=\|f\|_{H^1}.\notag
\eeq
For more properties of $U(t_0,t)$, see Appendix \ref{sec:appendix}.

\medskip
Note that, given $u_0 \in H^1(\R)$, there exists a unique solution $v \in \calC((-T_-, T_+), H^1)$ for equation \eqref{eq:duhamel fomula} by Corollary \ref{cor:maximal existence in H^1}. If we let $u=T_{D(t)}v$, then $u\in \calC((-T_-, T_+), H^1)$ solves equation \eqref{eq:duhamel fomula for o} and the blow--up alternative holds since $\{T_{D(t)}: t\in \R \}$ is a strongly continuous group of unitary operators on $H^1(\R)$.
Moreover, since $H^1(\R)\hookrightarrow L^\infty(\R)$, we have $u\in L^\infty((-T_-, T_+), L^2) \cap L^4_\loc((-T_-, T_+), L^\infty)$ and, therefore, by
the Riesz--Thorin interpolation Theorem,
\beq\label{eq:mixed space}
u\in L^q_\loc((-T_-, T_+), L^p)
\eeq
for every admissible pair $(p,q)$. Before we prove the global existence of a solution, we show the existence of a unique global solution of \eqref{eq:duhamel fomula for o} with the initial datum $u_0\in L^2(\R)$ when $\dav\neq \pm 1$.
As usual, we  use the Strichartz estimates(Lemma \ref{lem:strichartz}) to prove the existence of a local solution for \eqref{eq:duhamel fomula for o}, see \cite{Cazenave, Kato1987}  for example.

\begin{proposition}\label{prop:L^2theory}
Let $\dav\neq \pm 1$.
 For any  $u_0\in L^2(\R)$, there exists a unique global solution $u\in \calC(\R, L^2)\cap L^6_{\loc}(\R, L^6)$  of \eqref{eq:duhamel fomula for o}. Moreover, the solution $u$  satisfies
 $$
\|u(t)\|_{L^2}=\|u_0\|_{L^2} \quad \text{for all }  t\in \R.
$$
Furthermore, for every $M>0$ and admissible pair  $(p,q)$, there exists a positive constant $C$ depending on $\dav$ and $\|u_0\|_{L^2}$ such that
\beq\label{est:solutionL^pL^q}
\|u\|_{L^q([-M,M], L^p)} \leq C.
\eeq
\end{proposition}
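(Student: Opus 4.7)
The plan is a Kato-type fixed point argument in a Strichartz mixed-norm space, followed by mass conservation to promote the local solution to a global one. The hypothesis $\dav \neq \pm 1$ ensures that the time-dependent dispersion $\dav + d_0(t)$ is bounded away from zero, so the propagator $U(t_0,t)$ satisfies the one-dimensional Strichartz estimates collected in Appendix \ref{sec:appendix}: for every admissible pair $(p,q)$ (that is, with $\frac{2}{q}+\frac{1}{p}=\frac{1}{2}$, $2\leq p\leq \infty$),
\[
\|U(0,\cdot)f\|_{L^q([-M,M],L^p)} \lesssim \|f\|_{L^2},
\]
together with the inhomogeneous estimate allowing any second admissible pair on the right-hand side.

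For the local theory, the plan is to work in $X_M = \calC([-M,M],L^2) \cap L^6([-M,M],L^6)$ with the natural sum norm and to consider the Duhamel map $\Phi$ defined by the right-hand side of \eqref{eq:duhamel fomula for o}. The crucial observation is that $(2,\infty)$ is admissible in one dimension, so its dual $(2,1)$ is exactly what absorbs the cubic nonlinearity: the pointwise identity $\||u|^2u\|_{L^2_x}=\|u\|_{L^6_x}^3$ combined with Cauchy--Schwarz in time gives
\[
\|G|u|^2 u\|_{L^1_t L^2_x} \leq \|G\|_{L^\infty}(2M)^{1/2}\|u\|_{L^6_t L^6_x}^3.
\]
Together with Strichartz this yields $\|\Phi(u)\|_{X_M}\leq C\|u_0\|_{L^2}+CM^{1/2}\|u\|_{X_M}^3$ and a parallel Lipschitz estimate with factor $\|u\|_{X_M}^2+\|w\|_{X_M}^2$. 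Taking the ball radius $R=2C\|u_0\|_{L^2}$ and $M$ small depending only on $\|u_0\|_{L^2}$ (and on $\|G\|_{L^\infty}$, $\dav$) makes $\Phi$ a contraction and produces a unique fixed point $u\in X_M$. Uniqueness in the full space $\calC([-M,M],L^2)\cap L^6([-M,M],L^6)$ follows by partitioning $[-M,M]$ into finitely many subintervals on which $\|u\|_{L^6L^6}$ is small enough that the same Lipschitz bound forces two solutions to coincide.

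Mass conservation is then obtained by approximation. Picking $u_0^{(n)}\in H^1(\R)$ with $u_0^{(n)}\to u_0$ in $L^2$, the $H^1$-solution $u^{(n)}$ from Corollary \ref{cor:maximal existence in H^1} satisfies $\|u^{(n)}(t)\|_{L^2}=\|u_0^{(n)}\|_{L^2}$ for all $t\in\R$ by the standard computation: both the self-adjoint dispersive term and the real-valued nonlinearity $G|u^{(n)}|^2 u^{(n)}$ contribute purely imaginary pairings against $u^{(n)}$. By \eqref{eq:mixed space} we have $u^{(n)}\in X_M$, so by the $L^2$-uniqueness just established, $u^{(n)}$ coincides on $[-M,M]$ with the $L^2$-fixed point of $\Phi$ for the datum $u_0^{(n)}$. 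The $L^2$ Lipschitz estimate then gives $u^{(n)}\to u$ in $X_M$ and transfers the identity to the limit, yielding $\|u(t)\|_{L^2}=\|u_0\|_{L^2}$. Because the local existence time depends only on $\|u_0\|_{L^2}$, mass conservation permits indefinite iteration and produces $u\in\calC(\R,L^2)\cap L^6_\loc(\R,L^6)$.

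Finally, the bound \eqref{est:solutionL^pL^q} for an arbitrary admissible pair $(p,q)$ on $[-M,M]$ is obtained by partitioning $[-M,M]$ into finitely many subintervals on which $\|u\|_{L^6L^6}$ is small enough to close the inhomogeneous Strichartz estimate, and summing the resulting bounds. The main obstacle I anticipate is essentially bookkeeping around the time-dependent propagator: unlike the free Schrödinger group, $U(t_0,t)$ is not a semigroup in $t$, so the Strichartz estimates of Appendix \ref{sec:appendix} must be invoked in a form whose constants stay uniform in the reference time $t_0$ and in the window length (with dependence on $\dav$ through the distance to $\pm 1$). Once those uniform Strichartz bounds are in place, the argument is a direct adaptation of the classical $L^2$-local theory for cubic NLS on the line.
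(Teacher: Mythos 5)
Your proposal is correct and follows essentially the same route as the paper: a contraction in $\calC([0,M],L^2)\cap L^6([0,M],L^6)$ using the Strichartz estimates of Appendix \ref{sec:appendix} with the dual pair $(2,1)$ and Cauchy--Schwarz in time, uniqueness by shrinking the interval, and iteration via mass conservation, with the general admissible-pair bound recovered from the inhomogeneous Strichartz estimate. The only cosmetic differences are that you justify mass conservation by $H^1$-approximation (a step the paper leaves implicit) and obtain \eqref{est:solutionL^pL^q} by partitioning rather than the paper's direct H\"older estimate in $L^{6/5}_tL^{6/5}_x$; your caution about keeping the Strichartz constants uniform in $t_0$ and confined to intervals within a single integer cell $[n,n+1]$ is exactly the right bookkeeping point.
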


\begin{proof}
Let $0\neq u_0\in L^2(\R)$ be fixed. Without loss of generality, we consider positive $t$ only.
First, to prove the existence of a unique solution  in $\calC([0,1], L^2)\cap L^6([0,1], L^6)$ of \eqref{eq:duhamel fomula for o}, let us define the closed ball
\beq\label{def:B}
B_{M,a}:=\{u\in L^\infty([0,M], L^2) \cap L^6([0,M], L^6) \; : \; \|u\|_{L^\infty([0,M], L^2)} + \|u\|_{ L^6([0, M], L^6)}\leq a\} \notag
\eeq
equipped with the distance
$$
d(u,v)=\|u-v\|_{ L^\infty([0, M], L^2)}+\|u-v\|_{ L^6([0, M], L^6)}
$$
for each $0<M \leq 1$ and $a>0$.
Define the map $\Phi$ on $B_{M,a}$ by
\[
\Phi(u)(t)=U(0,t)u_0 + i \int _0^t U(t',t) \Bigl( G(t')|u(t')|^2u(t')\Bigr)dt'.
\]
For appropriate values of $M$ and $a$, the map  $\Phi$ is a contraction  on $(B_{M,a},d)$. Indeed, it follows from the Stricharz estimates(Lemma \ref{lem:strichartz}) and the Cauchy--Schwarz  inequality that
\beq
\begin{aligned}\label{ineq:L^2proofHolder}
\|\Phi(u)\|_{L^6([0, M], L^6)}&\leq  C\|u_0\|_{L^2}+ C\| G(\cdot) |u|^2u\|_{L^{1}([0, M], L^{2 })}\\
& \leq C \|u_0\|_{L^2}+ C M^{1/2} \| u\|_{L^6([0, M], L^6)}^3\ .
\end{aligned}
\eeq
On the other hand, using the unitarity of $U(0,t)$ on $L^2(\R)$ and the argument used in \eqref{ineq:L^2proofHolder},  we obtain
\[
\begin{aligned}
\|\Phi(u)\|_{L^\infty([0, M],L^2)}&\leq  \|u_0\|_{L^2}+  C\|G(\cdot) |u|^2u\|_{L^{1}([0, M], L^{2 })} \\
& \leq \|u_0\|_{L^2}+ C M^{1/2} \| u\|_{L^6([0, M], L^6)}^3.
\end{aligned}
\]
Next, noting
\beq\label{eq:difference of cubes}
||z_1|^2z_1-|z_2|^2z_2|\leq C (|z_1|^2+|z_2|^2)|z_1-z_2| \quad\mbox{for all }z_1, z_2\in \C,
\eeq
 by the same arguments above, we get
\begin{align}\label{ineq:distance}
\| \Phi(u) -\Phi(v) \| _{L^6([0, M],L^6)} &\leq C
\|G(\cdot)(|u|^2u-|v|^2v)\|_{L^1([0, M], L^{2  })} \notag\\
&\leq C \int_0^M (\|u(t)\|_{L^6}^2 + \|v(t)\|_{L^6}^2)\|u(t)-v(t)\|_{L^6} dt\\
& \leq CM^{1/2}(\|u\|_{L^6([0, M],L^6)}^2 + \|v\|_{L^6([0, M],L^6)}^2 ) \|u-v\|_{L^6([0, M],L^6)} \notag
\end{align}
and
\[
\begin{aligned}
\|\Phi(u)-\Phi(v)\|_{L^\infty([0, M],L^2)}&\leq C
\|G(\cdot)(|u|^2u-|v|^2v)\|_{L^1([0, M], L^{2  })} \\
& \leq CM^{1/2}\left(\|u\|_{L^6([0, M],L^6)}^2 + \|v\|_{L^6([0, M],L^6)}^2 \right) \|u-v\|_{L^6([0, M],L^6)}.
\end{aligned}
\]
Therefore, we have a positive constant $C$ such that  for all $u, v\in B_{M,a}$,
\[
\|\Phi(u)\|_{L^\infty([0, M],L^2)}+\|\Phi(u)\|_{L^6([0, M], L^6)}  \leq  C \|u_0\|_{L^2} + CM^{1/2}a^3
\]
and
\beq \label{ineq:contraction}
d(\Phi(u),\Phi(v))\leq CM^{1/2}a^2 d(u,v).\notag
\eeq
Now set $a=2C\|u_0\|_{L^2}$ and choose $0<M\leq 1$ satisfying
\beq \label{choice:M}
C M^{1/2}(2C\|u_0\|_{L^2})^2<  \f12, \notag
\eeq
then we obtain that $\Phi$ is a contraction from $B_{M,2C\|u_0\|_{L^2}}$ into itself. Thus, $\Phi$ has a unique fixed point $u\in B_{M,2C\|u_0\|_{L^2}}$ and
\beq \label{ineq:bound in S}
\|u\|_{L^\infty([0, M],L^2)}+ \|u\|_{L^6([0, M],L^6)}\leq 2C\|u_0\|_{L^2}.
\eeq
Moreover, the Stricharz estimates guarantee that $u$ is even in $\calC([0,M], L^2) \cap L^6([0,M], L^6) $.

To prove the uniqueness in $\calC([0,M], L^2) \cap L^6([0,M], L^6)$, it is enough to find a small $\delta>0$ so that the uniqueness is guaranteed in $\calC([0,\delta], L^2) \cap L^6([0,\delta], L^6)$. Suppose that
$u_1, u_2 \in \calC([0,\delta], L^2) \cap L^6([0,\delta], L^6) $ solve \eqref{eq:duhamel fomula for o}, for $0< \delta \le M$.
It follows from \eqref{ineq:distance} that
\[
\|u_1-u_2\|_{L^6([0,\delta], L^6)}
 \leq
C\delta^{1/2} \left(\|u_1\|^2_{L^6([0,\delta], L^6)}+\|u_2\|^2_{L^6([0,\delta], L^6)}\right)\|u_1-u_2\|_{L^6([0,\delta], L^6)}.
\]
Choosing $\delta>0$ small enough, we obtain
\[
\|u_1-u_2\|_{L^6([0,\delta], L^6)}\leq \f 1 2  \|u_1-u_2\|_{L^6([0,\delta], L^6)},
\]
which implies that $u_1=u_2$ on $\calC([0,\delta], L^2) \cap L^6([0,\delta], L^6)$.

Let $u\in \calC([0,M], L^2) \cap L^6([0,M], L^6)$ be a unique solution of \eqref{eq:duhamel fomula for o}. We now show that $u\in L^q([0,  M], L^p)$
for every admissible pair $(p,q)$.
Applying the Strichartz estimates to \eqref{eq:duhamel fomula for o} and the H\"older inequality with exponents
$\f{5}{2}$ and $\f{5}{3}$ in $x-$integral and $t-$integral,
we get
\[
\begin{aligned}
\|u\|_{L^q([0 ,  M], L^p)}&\leq  C\|u_0\|_{L^2}+ C\| G(\cdot) |u|^2u\|_{L^{6/5}([0 , M], L^{6/5})}\\
& \leq C \|u_0\|_{L^2}+ C\left(\int_0^{  M} \|u(t)\|_{L^2}^{6/5}\|u(t)\|_{L^6}^{12/5}dt\right)^{5/6}
\\
& \leq C \|u_0\|_{L^2}+ CM^{1/2} \|u\|_{L^\infty([0,M],L^2)} \| u\|_{L^6([0, M], L^6)}^2
\\
& \leq C \|u_0\|_{L^2}+ CM^{1/2}\|u_0\|^3_{L^2}
,
\end{aligned}
\]
where \eqref{ineq:bound in S} is used. Moreover, since $u$ satisfies the mass conservation, that is,
$$
\|u(t)\|_{L^2}=\|u_0\|_{L^2} \quad \text{for all } t \in [0,M],
$$
one can iterate this argument to obtain a unique solution in $\calC([0,1], L^2)\cap L^6([0,1], L^6)$ of \eqref{eq:duhamel fomula for o}.
Iterating this process, we obtain a unique global solution and it satisfies \eqref{est:solutionL^pL^q}.
\end{proof}

Now we give the proof of Theorem \ref{thm:globalwellposedness}.

\begin{proof}[Proof of Theorem \ref{thm:globalwellposedness}]
Let $u_0\in H^1(\R)$ be fixed and let us consider positive times only. \\
We first consider the case $\dav\neq \pm 1$.
Let $u\in \calC([0, \infty), L^2)\cap L^6_{\loc}([0, \infty), L^6)$ be the solution of \eqref{eq:duhamel fomula for o}, obtained in Proposition \ref{prop:L^2theory}.
On the other hand, Corollary \ref{cor:maximal existence in H^1} gives us the solution $\tilde{u}\in \calC( [0,T_+), H^1)$ for \eqref{eq:duhamel fomula for o}, where $T_+>0$ is the maximal life time.
Moreover, we have $\tilde{u}\in \calC( [0,T_+), L^2) \cap L^6_{\loc}([0,T_+), L^6)$ from \eqref{eq:mixed space}.
By the uniqueness of a solution in  $\calC( [0,T_+),L^2) \cap L^6_{\loc}([0,T_+), L^6)$, $u=\tilde{u}$ and, therefore, $u$ is also in $\calC( [0,T_+), H^1)$ and it remains to show $T_+=\infty$.

Suppose to the contrary that $T_+< \infty$ and
choose $n \in \N_0$ such that $n< T_+ \le n+1$.
It follows from \eqref{est:solutionL^pL^q} that
\beq\label{est:mixed norm}
\|u\|_{L^q([n,T_+], L^p)} < \infty
\eeq
for every admissible pair $(p,q)$.

For now, we assume to have
\beq\label{eq:formula}
\f{d}{dt} \| \partial_x u(t)\|_{L^2}^2 = - 2G(t)\im \int _\R \Bigl(u(t)\overline{\partial_x u(t)}\Bigr)^2 dx
\eeq
for all $t \in (n,T_+)$.
Then, for every $t \in (n,T_+)$,
$$
\f{d}{dt} \| \partial_x u(t)\|_{L^2}^2 \leq  2 \|u(t)\|_{L^\infty}^2 \| \partial_x u(t)\|_{L^2}^2
$$
and, therefore, by Gronwall's inequality and the Cauchy--Schwarz inequality, we have
\[
\begin{aligned}
\| \partial_x u(t)\|_{L^2}^2 &\leq \|\partial_x u(n)\|_{L^2}^2 \exp\left(2\int_n ^t  \|u(t')\|_{L^\infty}^2dt'\right)\\
&\leq  \|\partial_x u(n)\|_{L^2}^2\exp\left( 2\|u\|_{L^4([n, T_+), L^\infty)}^2 \right).
\end{aligned}
\]
Thus, we use \eqref{est:mixed norm} and the mass conservation to obtain
\beq\label{eq:bounded}
\sup_{n < t < T_+}\| u (t)\|_{H^1}^2  < \infty
\eeq
which contradicts the blow--up alternative.

To finish this case, it remains to show \eqref{eq:formula}. We use the twisted solution
\[
w(t)=(U(n,t))^{-1}u(t).
\]
Since $u$ solves \eqref{eq:duhamel fomula for o}, $w$ solves
\beq
w(t)= u(n) +i\int_n ^t (U(n,t'))^{-1}G(t')|u(t')|^2u(t')dt' \notag
\eeq
and, therefore, $w$ is differentiable on $(n, T_+)$ and
\beq
\dot{w}  (t) =\partial_t w(t)=i(U(n,t))^{-1}G(t)|u(t)|^2u(t)\notag
\eeq
which is in $H^1(\R)$.
Using this, one sees that
\[
\begin{aligned}
\f{d}{dt} \| \partial_x u(t)\|_{L^2}^2  = \f{d}{dt} \| \partial_x w(t)\|_{L^2}^2  &=2 \re  \la \partial_x w(t), \partial_x \dot{w}  (t) \ra \\
&= -2 G(t)\im \la \partial_x  u(t), \partial _x (|u(t)|^2 u(t))\ra ,
\end{aligned}
\]
which yields \eqref{eq:formula}.

Next, we assume that $\dav = 1$ and solve \eqref{eq:mainCauchy} recursively. First, we find a solution in $\calC([0,1], H^1)$ of \eqref{eq:mainCauchy}, i.e.,
\[
\begin{cases}
 i\partial_t u + 2\partial^2_x u+ G(t)|u|^2u =0 &\mbox{for }t\in (0, 1),\\
 u=u_0 & \mbox{for } t=0.
\end{cases}
\]
It is well--known that there exists a unique solution $u \in \calC([0,1], H^1)$, for example, see \cite{CS}. Denote $u(1)$ by $u_1$ and solve the ordinary differential equation with the initial datum $u_1$
\beq\label{eq:ode}
\begin{cases}
  i\partial_t u + G(t)|u|^2u =0 & \mbox{for }t\in (1, 2) \\
  u=u_1  & \mbox{for } t=1
\end{cases}
\eeq
of which solution is given by
\beq\label{eq:solution for ode}
u(t)=u_1\exp {\left(i|u_1|\int_1 ^tG(t')dt'\right)}.
\eeq
Here, we used that $|u(t)|=|u_1|$ for all $t \in (1,2)$. Indeed, multiplying the ordinary equation in \eqref{eq:ode}  by $\overline{u}$ and taking the imaginary part of the resulting identity, we obtain $\partial_t |u(t)|^2=0$ for all $t \in (1,2)$. Noting that \eqref{eq:solution for ode} has the limit at $t=2$, we continuously extend $u$ to $[1,2]$. Repeating this process completes the proof. The case $\dav =-1$ is done similarly.
\end{proof}

\begin{remark}
To get \eqref{eq:formula}, one can use a formal calculation as follows:
\beq
\f{d}{dt} \| \partial_x u(t)\|_{L^2}^2 =-2\re \la \partial_x u, \partial_x \partial_t u \ra =
2\im \left((\dav+d_0(t))\la \partial_x u, \partial_x\partial_x^2 u\ra
+  G(t) \la \partial_x u, \partial_x |u|^2u\ra\right). \notag
\eeq
However, the scalar product $\la \partial_x u, \partial_x\partial_x^2 u\ra$  in $L^2(\R)$ may not be defined for $u\in H^1(\R)$  since $\partial_x u\in L^2(\R)$ and $\partial_x\partial_x^2 u=\partial_x^3 u\in H^{-2}(\R)$.
 Thus,
as in \cite{CHLW}, we used the twisting argument, see, also, \cite{ AHH, O06}.
\end{remark}

{
\section{Averaging theorem}\label{sec:Averaging theorem}

In this section, to prove the averaging theorem (Theorem \ref{thm:intro averaging}),
we compare the solutions of equations \eqref{eq:intro transformed equation} and \eqref{eq:intro averaged}
\beq \label{eq:transformed eq}
i\partial_t v + \dav \partial_{x}^2 v  + Q_\veps (v)=0,
\eeq
and
\beq \label{eq:averaged eq}
i\partial_t v + \dav \partial_{x}^2 v + \la Q\ra (v)=0
\eeq
with close initial data $u_0, v_0 \in H^1(\R)$, where the nonlinearities are given by
$$
 Q_\veps (v):=G\left(\f t\veps\right) T_{D(t/\veps)}^{-1} \Bigl(|T_{D(t/\veps)}v|^2 T_{D(t/\veps)}v\Bigr)
$$
and
$$
\la Q\ra (v) := \int_0^1  T_r^{-1} \Bigl(|T_r v|^2 T_r v\Bigr)\psi(r)dr,
$$
respectively.
Recall that
$$
\psi(r)=e^{-\Gamma}\cosh \Gamma(r-1).
$$
We, first, show a lemma which is inspired by the proof of Theorem 4.1 in \cite{ZGJT01}.
\begin{lemma}\label{lemma:averaging}
For every $M>0$, if $v\in \calC([-M,M], H^1)$, then
\beq\label{conv:averaging}
\int_0^t e^{i \dav (t-t')\partial_x^2 } Q_\veps (v(t')) dt'  \to
\int_0^t e^{i \dav (t-t')\partial_x^2 } \la Q\ra (v(t')) dt'
\eeq
in $\calC([-M, M], H^{1})$ as $\veps \to 0$.
\end{lemma}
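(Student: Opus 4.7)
The plan is to show that
\[
E_\veps(t) := \int_0^t e^{i\dav(t-t')\partial_x^2}\bigl[Q_\veps(v(t')) - \la Q\ra(v(t'))\bigr]\,dt'
\]
satisfies $\|E_\veps\|_{\calC([-M,M],H^1)} \to 0$ as $\veps\to 0$. The structural fact driving the argument is the exact averaging identity
\[
\int_{2k\veps}^{2(k+1)\veps}\!G(s/\veps)\, T_{D(s/\veps)}^{-1}\bigl(|T_{D(s/\veps)}\phi|^2 T_{D(s/\veps)}\phi\bigr)\,ds \;=\; 2\veps\,\la Q\ra(\phi),
\]
valid for every $\phi\in H^1(\R)$ and $k\in\Z$, which follows from the substitution $\tau=s/\veps$, the $2$-periodicity of $G$ and $D$, and the change of variable $r=D(\tau)$ used in the introduction to define $\la Q\ra$. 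Thus, if $v$ were constant on each period $I_k := [2k\veps,2(k+1)\veps]$ and the free propagator $e^{i\dav(t-t')\partial_x^2}$ could be pulled out of the $I_k$-integral, then $E_\veps$ would vanish up to the short tail near $t$.

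I would first reduce to spatially smooth $v$ by mollifying in $x$: set $v^\delta(t,\cdot) = \rho_\delta \ast v(t,\cdot)$, so that $v^\delta\in\calC([-M,M],H^3)$ for each $\delta>0$ and $v^\delta\to v$ in $\calC([-M,M],H^1)$ as $\delta\to 0$. Using unitarity of the free propagator on $H^1$, the Lipschitz bound \eqref{ineq:difference} for $Q_\veps$, and an analogous estimate for $\la Q\ra$ (obtained by the same multi-linear decomposition as in Lemma \ref{lem:Q}), one has
\[
\|E_\veps - E_\veps^\delta\|_{\calC([-M,M],H^1)} \leq C_M \|v-v^\delta\|_{\calC([-M,M],H^1)}
\]
uniformly in $\veps$, where $E_\veps^\delta$ is $E_\veps$ with $v^\delta$ in place of $v$. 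It therefore suffices to prove $E_\veps^\delta\to 0$ for each fixed $\delta>0$.

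For that, I would partition $[0,t]$ (taking $t\in[0,M]$; negative times are symmetric) into the periods $I_0,\ldots,I_{N-1}$ together with a tail of length less than $2\veps$, the latter contributing $O(\veps)$ directly from \eqref{ineq:bounded} and its $\la Q\ra$-analogue. On each $I_k$ I freeze $v^\delta(t')$ at $v^\delta(2k\veps)$; by \eqref{ineq:difference} and the modulus of continuity $\omega_\delta$ of $v^\delta:[-M,M]\to H^1$, the resulting error is at most $C\,\omega_\delta(2\veps)\,|I_k|$, summing to $CM\omega_\delta(2\veps)\to 0$. I also freeze the propagator using
\[
\|(e^{i\dav(t-t')\partial_x^2}-e^{i\dav(t-2k\veps)\partial_x^2})f\|_{H^1} \leq 2|\dav|\veps\,\|f\|_{H^3},
\]
which follows from $I-e^{i\sigma\partial_x^2} = -i\int_0^\sigma e^{i\tau\partial_x^2}\partial_x^2\,d\tau$ and unitarity on $H^1$. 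Since $H^3(\R)$ is a Banach algebra and $T_s,T_s^{-1}$ are unitary on $H^3$, both $Q_\veps(v^\delta(t'))(t')$ and $\la Q\ra(v^\delta(t'))$ are uniformly bounded in $H^3$ by a constant multiple of $\|v^\delta\|_{\calC([-M,M],H^3)}^3$; hence this error is $O(\veps^2)$ per period, totaling $O(\veps)$ after summing over $N\lesssim M/\veps$ periods. After both freezings, the $I_k$-contribution is $e^{i\dav(t-2k\veps)\partial_x^2}$ applied to $\int_{I_k}[Q_\veps(v^\delta(2k\veps))(t')-\la Q\ra(v^\delta(2k\veps))]\,dt'$, which vanishes by the averaging identity.

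The main obstacle is the regularity trade in the propagator-freezing step: $s\mapsto e^{is\partial_x^2}$ is only strongly continuous on $H^1$, not norm continuous, so quantifying its variation costs two spatial derivatives. This is the sole place the mollification is needed; every estimate involving $Q_\veps$ and $\la Q\ra$ is otherwise carried out at the $H^1$ level via Lemma \ref{lem:Q}. Combining all estimates gives $\|E_\veps^\delta\|_{\calC([-M,M],H^1)} = O(\veps) + o_\veps(1)$ for each fixed $\delta$, and letting $\delta\to 0$ in the reduction completes the proof.
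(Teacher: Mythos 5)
Your proof is correct, and it takes a genuinely different route from the paper. The paper works on the Fourier side: it writes $Q_\veps(v)-\la Q\ra(v)$ as a trilinear operator with oscillating symbol $A_\veps$, introduces the time-antiderivative $B_\veps=\int_0^t A_\veps\,dt'$ (bounded by $4\veps$ because $A_1$ is $2$-periodic with mean zero), and integrates by parts in $t$ in the Duhamel integral; this requires a density reduction to $v\in\calC^1([0,M],\calS(\R))$, since the boundary and commutator terms involve $\partial_t v$ and $\partial_x^2 v$. You instead stay in physical space and exploit the exact cancellation $\int_{I_k}Q_\veps(\phi)(t')\,dt'=2\veps\,\la Q\ra(\phi)$ over each full period $I_k$, paying for it with two freezing errors: freezing $v$ costs only the modulus of continuity of $v$ in $H^1$ (no time derivative needed), while freezing the propagator costs two spatial derivatives, which you supply by mollifying in $x$ only. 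All the individual steps check out: the averaging identity follows from the substitution $\tau=s/\veps$, periodicity, and the change of variable $D(\tau)=r$ defining $\psi$; the bound $\|(\id-e^{i\sigma\partial_x^2})f\|_{H^1}\le|\sigma|\,\|f\|_{H^3}$ is correct; the tail and the $O_\delta(\veps^2)\cdot N$ sums are handled properly; and the uniform-in-$\veps$ Lipschitz reduction to $v^\delta$ is exactly the same density device the paper uses. The trade-off is that the paper's computation yields a clean $O(\veps)$ rate for smooth $v$ with explicit dependence on $\|\partial_t v\|_{H^1}$ and $\|\partial_x^2 v\|_{H^1}$ (which is what is actually invoked, via the lemma, in the proof of Theorem \ref{thm:intro averaging}), whereas your argument gives $O_\delta(\veps)+CM\,\omega(2\veps)$, i.e.\ only $o(1)$ for general continuous $v$ but likewise $O(\veps)$ once $v$ is smooth in space and Lipschitz in time; both conclusions match the statement of the lemma, and your approach has the mild advantage of never differentiating the approximant in time.
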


\begin{proof}

We consider positive times only. Let $R_\veps(v):=  Q_\veps (v )-\la Q\ra (v)$. Then, by the Plancherel identity, its Fourier transform in $x$ can be expressed by
\beq
\widehat{R_\veps(v)}( \xi,t) = \f{1}{2\pi}\int_{\R^3} \delta(\xi_1-\xi_2 +\xi_3 -\xi)
A_\veps (\xi_1,\xi_2 ,\xi_3 ,\xi, t )  \hat{v}(\xi_1,t) \overline{ \hat{v}( \xi_2,t) } \hat{v}(\xi_3,t)  d\xi_1 d\xi_2d\xi_3 \notag
\eeq
for all $\xi \in \R$ and $t \in [0,M]$, where
$$
    A_\veps (\xi_1,\xi_2 ,\xi_3 ,\xi, t ) := G(t/\veps) e^{-i D(t/\veps)(\xi_1^2-\xi_2^2 +\xi_3^2 -\xi^2)}-\int _0^1 e^{-ir(\xi_1^2-\xi_2^2 +\xi_3^2-\xi^2)}\psi(r) dr .
$$
Now, we define $B_\veps:\R^4\times[0,M] \to \C$ by
\beq
B_\veps (\xi_1,\xi_2 ,\xi_3 ,\xi, t ) : =\int _0 ^t A_\veps (\xi_1,\xi_2 ,\xi_3 ,\xi, t' ) dt' , \notag
\eeq
then
\[
  B_\veps (\xi_1,\xi_2 ,\xi_3 ,\xi, t )  =\int _0 ^t A_1 (\xi_1,\xi_2 ,\xi_3 ,\xi, t'/\veps ) dt'
   =\veps \int _0 ^{t/\veps} A_1 (\xi_1,\xi_2 ,\xi_3 ,\xi, t' ) dt',
\]
and therefore,
\[
|B_\veps (\xi_1,\xi_2 ,\xi_3 ,\xi, t )|
\le \veps \int _0 ^2 |A_1 (\xi_1,\xi_2 ,\xi_3 ,\xi, t')| dt'
\le 4\veps
\]
since $A_1$ is a $2-$periodic function in $t$ with mean zero and bounded by $2$. Thus,
\beq\label{B}
\|B_\veps  \|_{L^\infty (\R^4\times[0,M] )} \leq 4\veps.
\eeq
 Using the same argument as in \eqref{ineq:Lips},  for every $v_1, v_2\in \calC([0,M], H^1(\R))$, we see
\beq \label{ineq:density}
\begin{aligned}
&\left\|\int_0 ^\cdot e^{i\dav (\cdot-t')\partial_x^2} \left(Q_\veps (v_1)(t')- Q_\veps (v_2)(t')\right)dt' \right\|_{L^\infty([0, M], H^1)}
\leq CM  \|v_1-v_2\|_{L^\infty([0,M], H^1)}.
\end{aligned}
\eeq
The estimate \eqref{ineq:density} also holds when $Q_\veps$ is replaced by $\la Q\ra$.
Therefore, by a density argument, it suffices to prove \eqref{conv:averaging} for $v\in \calC^1([0, M], \mathcal{S}(\R))$ only.
Since
\[
\f{\partial}{\partial t}B_\veps(\xi_1,\xi_2 ,\xi_3 ,\xi,t)= A_\veps(\xi_1,\xi_2 ,\xi_3 ,\xi ,t)
\]
for almost every $t\in [0, M]$, by the integration by parts, we obtain
\[
\int_0^t  e^{i\dav (t-t')\xi^2} \widehat{R_\veps(v)}( \xi,t') dt'
=\widehat{ \calI_1(v)} (\xi,t)- \int_0^t  e^{i\dav (t-t')\xi^2} \left(\widehat{\calI_2(v)} (\xi,t')-  i \dav \widehat{\calI_3(v)} (\xi,t')\right)dt',
\]
where
\[
\widehat{ \calI_1(v)}(\xi,t)= \int_{\R^3}  \delta(\xi_1-\xi_2 +\xi_3 -\xi)B_\veps(\xi_1,\xi_2 ,\xi_3 ,\xi,t)\hat{v}(\xi_1,t) \overline{ \hat{v}( \xi_2,t) } \hat{v}(\xi_3,t)  d\xi_1 d\xi_2d\xi_3,
\]
\[
\widehat{\calI_2(v)} (\xi,t)=  \int_{\R^3}  \delta(\xi_1-\xi_2 +\xi_3 -\xi) B_\veps(\xi_1,\xi_2 ,\xi_3 ,\xi,t)
\partial_{t} \bigl(\hat{v}(\xi_1,t) \overline{ \hat{v}( \xi_2,t) } \hat{v}(\xi_3,t)\bigr)
d\xi_1 d\xi_2d\xi_3 ,
\]
  and
\[
\widehat{\calI_3(v)}(\xi,t)=\int_{\R^3}  \delta(\xi_1-\xi_2 +\xi_3 -\xi)
B_\veps(\xi_1,\xi_2 ,\xi_3 ,\xi,t)\xi^2
\hat{v}(\xi_1,t) \overline{ \hat{v}( \xi_2,t) } \hat{v}(\xi_3,t)
d\xi_1 d\xi_2d\xi_3.
\]
Then, fix $t\in [0, M]$, we have
\begin{align}\label{eq:limit}
& \left\| \int_0^t e^{i \dav (t-t')\partial_x^2 } R_\veps(v)(\cdot, t') dt'\right\|_{H^{1}}
= \left(\int_\R(1+\xi^2)\left|\int_0^t e^{i \dav (t-t')\xi^2 } \widehat{R_\veps(v)}(\xi, t') dt'\right|^2 d\xi\right)^{1/2} \notag\\
& \lesssim \|\calI_1(v)(\cdot,t)\|_{H^1} +
 \int_0 ^t  \left(\|\calI_2(v)(\cdot,t')\|_{H^1} + \|\calI_3(v)(\cdot,t')\|_{H^1}\right)dt',
  \end{align}
where we use Minkowski's inequality.
First, to get a bound of $\calI_1(v)$, note
\beq
\begin{aligned}
|\widehat{ \calI_1(v)}(\xi,t)|\leq
\|B_\veps\|_{L^\infty (\R^4\times[0,M] )} \left|\int_{\R^2}  \hat{v}(\xi_1,t) \overline{ \hat{v}( \xi_2,t) } \hat{v}(\xi-\xi_1+\xi_2,t)  d\xi_1 d\xi_2\right|\notag
\end{aligned}
\eeq
for all $\xi$ and $t$.
If we define $I(f_1, f_2, f_3)$ by its Fourier transform
\[
\widehat{I}(f_1, f_2, f_3)(\xi)= \int_{\R^2} \hat{f_1}(\xi_1) \overline{ \hat{f_2}( \xi_2) } \hat{f_3}(\xi-\xi_1+\xi_2)  d\xi_1 d\xi_2
\]
for all $f_1, f_2, f_3 \in \calS(\R)$, then by a straightforward calculation, we obtain
\beq \label{ineq:I}
\|I(f_1, f_2, f_3)\|_{H^1}\leq C\|f_1\|_{H^1} \|f_2\|_{H^1} \|f_3\|_{H^1}.\notag
\eeq
This together with \eqref{B}, we have
\beq\label{ineq:I1}
\|\calI_1(v)(\cdot, t)\|_{H^1} \le \|B_\veps\|_{L^\infty (\R^4\times[0,M] )}\|I(v(t),v(t),v(t))\|_{H^1}
\lesssim  \veps \|v(t)\|_{H^1}^3\notag
\eeq
for all $t$. By a similar argument, we have
\[
\|\calI_2(v)(\cdot, t)\|_{H^1} \lesssim  \veps \|\partial_t v(t)\|_{H^1} \|v(t)\|_{H^1}^2
\]
and
\[
\|\calI_3(v)(\cdot, t)\|_{H^1}
\lesssim  \veps \Bigl(\|\partial_x^2 v(t)\|_{H^1} \|v(t)\|_{H^1}^2
+ \|\partial_x v(t)\|_{H^1}^2 \|v(t)\|_{H^1}\Bigr)
\]
for all $t$. Substituting the last three inequalities into \eqref{eq:limit} completes the proof.
\end{proof}

 Now we are ready to give
\begin{proof}[Proof of Theorem \ref{thm:intro averaging}]
Fix $M>0$ and consider positive times only.
Let
\beq
K=2\sup_{t\in [0, M]}\|v(t)\|_{H^1} \notag
\eeq
and let $0< \veps \leq \f{K}{2}$ for now. Then we have $\|u_0\|_{H^1}\leq K$
since  $\|u_0-v_0\|_{H^1}\leq \veps$ and $\|v_0\|_{H^1}\leq \f{K}{2}$.
Then, it follows from Proposition \ref{prop:H^slocal theory} that there exists $M_+= M_+(K)$, independent of $\veps$, such that
$v_\veps\in \calC([0, M_+], H^1(\R))$ and
\beq\label{sup:veps}
\sup_{0<\veps\leq \f K 2}\sup_{t \in [0, M_+]} \|v_\veps(t) \|_{H^{1}} \leq 2K.
\eeq
We now prove that \eqref{ineq:averaging thm} holds on $[0, M_+]$, i.e., there exists $C>0$ such that
\[
\|v_\veps - v\|_{\calC([0, M+],H^{1}(\R))}\leq C \veps.
\]
By Duhamel's formula, we have
\[
v_\veps(t)-v(t) = e^{i\dav t \partial^2_x}(u_0-v_0)+i \mathcal{I}_1(t)+i\mathcal{I}_2(t)
\]
for all $0\leq t \leq M_+$, where
\[
\mathcal{I}_1(t)=\int_0^t  e^{i\dav (t-t')\partial_x^2}\left[Q_\veps (v_\veps  )(t')
-Q_\veps (v)(t')\right] dt'
\]
and
\[
\mathcal{I}_2(t)=\int_0^t  e^{i\dav (t-t')\partial_x^2}\left[Q_\veps (v)(t')-
 \la Q\ra (v)(t') \right] dt'.
\]
It follows from Lemma \ref{lemma:averaging} that there exists a constant $C>0$ such that
\beq\label{eq:I1}
\begin{aligned}
\sup_{t\in [0, M_+]}\left\|\mathcal{I}_2(t) \right\|_{H^{1}}  \leq C\veps.
\end{aligned}
\eeq
To bound $\mathcal{I}_1$, we use Minkowski's inequality and Lemma \ref{lem:Q}, then we obtain
\beq\label{eq:I2}
\begin{aligned}
\left\|\mathcal{I}_1(t) \right\|_{H^{1}}  & \leq  \int_0^t \|Q_\veps (v_\veps)(t') -Q_\veps (v )(t') \|_{H^{1}} dt' \\
& \lesssim  \int_0^t \left(\|v_\veps(t') \|^2_{H^{1}}+\|v(t')\|^2_{H^{1}} \right)\|v_\veps(t') -v(t') \|_{H^{1}} dt'\\
\end{aligned}
\eeq
for all $0\leq t \leq M_+$.
Since  $\|u_0-v_0\|_{H^1}\leq \veps$, it follows from \eqref{eq:I1} and  \eqref{eq:I2} that, for all $0\leq t \leq M_+$, there exist positive constants $C_1$, depending only on $K$, and $C_2$  such that
\begin{align*}
  \|v_\veps(t)-v(t)\|_{H^{1}} & \le \|u_0-v_0\|_{H^1}+ \left\|\mathcal{I}_1(t) \right\|_{H^{1}} +\left\|\mathcal{I}_2(t) \right\|_{H^{1}}\\
   & \le C_2\veps + C_1 \int_0^t \|v_\veps(t') -v(t') \|_{H^{1}} dt'.
\end{align*}
Thus, by Gronwall's inequality, we obtain
\beq\label{conv:M_+}
\sup_{t \in [0, M_+]}\|v_\veps(t)-v(t)\|_{H^{1}} \leq C_2\veps e^{C_1M_+}.
\eeq
If $M_+\geq M$, the proof is complete and now we assume that $M_+<M$. It follows from \eqref{conv:M_+} and $v_\veps-v \in \calC([0, M_+], H^1(\R))$ that
\[
 \|v_\veps(M_+)-v(M_+)\|_{H^{1}}\leq  C_2\veps e^{C_1M_+}.
\]
Now choose $\veps_1>0$ such that $C_2\veps_1 e^{C_1M_+}\leq \f{K}{2}$ and $\veps_1\leq \f{K}{2}$. Let $0< \veps \leq \veps_1$. Then
\[
 \|v_\veps(M_+)\|_{H^{1}}\leq  \f{K}{2}+C_2\veps e^{C_1M_+}\leq K.
\]
Applying Proposition \ref{prop:H^slocal theory} to \eqref{eq:duhamel fomula} with the initial datum $v_\veps(M_+)$, which satisfies
\[
\sup_{0< \veps \leq \veps_1} \|v_\veps(M_+)\|_{H^1}\leq K,
\]  we have
\[
\sup_{0<\veps\leq \veps_1} \sup_{t\in [M_+, 2M_+]}\|v_\veps(t)\|_{H^1}\leq 2K.
\]
Combining this and \eqref{sup:veps}, we have
\beq\label{bound:veps_1}
\sup_{0<\veps\leq \veps_1}\sup_{t \in [0, 2M_+]} \|v_\veps(t) \|_{H^{1}} \leq 2K. \notag
\eeq
Repeat the above argument to obtain
\beq
\sup_{t \in [0, 2M_+]}\|v_\veps(t)-v(t)\|_{H^{1}} \leq C_2\veps e^{2C_1M_+}. \notag
\eeq
Iterating this argument, 
we can choose $\veps_0>0$ such that $C_2\veps_0 \exp\left((\lfloor \f {M} {M_+}\rfloor +1)C_1M_+ \right)\leq \f{K}{2}$ and $\veps_0\leq \f{K}{2}$ to get \eqref{ineq:averaging thm} with $C= C_2 \exp\left((\lfloor\f {M} {M_+} \rfloor+2)C_1M_+\right)$, where $\lfloor a \rfloor=\max \{n\in \Z : n\leq a\}$.
This completes the proof.
\end{proof}



%
\appendix
\setcounter{section}{0}
\renewcommand{\thesection}{\Alph{section}}
\renewcommand{\theequation}{\thesection.\arabic{equation}}
\renewcommand{\thetheorem}{\thesection.\arabic{theorem}}

\section{Linear propagator}\label{sec:appendix}
\setcounter{theorem}{0}
\setcounter{equation}{0}

For the reader's convenience, the properties of the linear propagator $U(t_0,t) $ defined in  \eqref{eq:linear-propagator}  are referred in this section, which can be also found in \cite{AK, ASS}.

\begin{lemma} \label{lem:dispersivelemma}
Let $\dav \neq \pm 1$. Then there exists a constant $C$ depending only on $\dav$ such that
\beq \label{ineq:dispersive}
\|U(t_0,t)f\|_{L^\infty}\leq C|t-t_0|^{-1/2}\|f\|_{L^1}\notag
\eeq
for all $f\in L^1(\R)$ and all distinct $t,t_0 $ with $\lfloor t \rfloor = \lfloor t_0\rfloor$.

\end{lemma}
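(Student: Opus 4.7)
The plan is to exploit the fact that $d_0$ is piecewise constant, so on any interval $[t_0,t]$ with $\lfloor t\rfloor=\lfloor t_0\rfloor$ the coefficient $\dav+d_0(\cdot)$ is a constant, namely either $\dav+1$ or $\dav-1$. Hence on such an interval the linear equation
\[
i\partial_s w +(\dav+d_0(s))\partial_x^2 w=0
\]
is just a free Schr\"odinger equation with the constant dispersion $\alpha\in\{\dav+1,\dav-1\}$, and the propagator $U(t_0,t)$ coincides with $e^{i(t-t_0)\alpha\partial_x^2}$.

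The key step is then to invoke the standard one-dimensional dispersive estimate for the free Schr\"odinger group with nonzero dispersion constant $\alpha$, which, via the explicit convolution kernel
\[
\bigl(e^{i\tau\alpha\partial_x^2}f\bigr)(x)=\frac{1}{\sqrt{4\pi i\alpha\tau}}\int_\R e^{i(x-y)^2/(4\alpha\tau)}f(y)\,dy,
\]
yields
\[
\bigl\|e^{i\tau\alpha\partial_x^2}f\bigr\|_{L^\infty}\le \frac{1}{\sqrt{4\pi|\alpha|\,|\tau|}}\,\|f\|_{L^1}.
\]
Since $\dav\neq\pm 1$, both $|\dav+1|$ and $|\dav-1|$ are strictly positive, so setting
\[
C:=\frac{1}{\sqrt{4\pi\,\min(|\dav+1|,|\dav-1|)}}
\]
gives a constant depending only on $\dav$ for which the claimed bound holds.

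Putting the two ingredients together: given distinct $t,t_0$ with $\lfloor t\rfloor=\lfloor t_0\rfloor$, identify $\alpha\in\{\dav\pm 1\}$ as the constant value of $\dav+d_0$ on that unit interval, use $U(t_0,t)=e^{i(t-t_0)\alpha\partial_x^2}$, and apply the free dispersive bound with $\tau=t-t_0$. There is no real obstacle here; the only thing to be careful about is the definition of $U(t_0,t)$, namely that $U(0,t)$ is obtained by concatenating the constant-coefficient Schr\"odinger flows across the jumps of $d_0$ so that $U(t_0,t)=U(0,t)U(0,t_0)^{-1}$ is indeed the single free propagator on any interval not crossing a jump point of $d_0$, which is precisely the condition $\lfloor t\rfloor=\lfloor t_0\rfloor$.
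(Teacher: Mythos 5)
Your proof is correct and follows essentially the same route as the paper: both identify $U(t_0,t)$ on an interval not crossing a jump of $d_0$ with the free propagator at effective time $\int_{t_0}^t d(t')\,dt' = (\dav\pm 1)(t-t_0)$ and apply the explicit kernel bound, with $\dav\neq\pm 1$ guaranteeing a nonzero dispersion constant.
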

\begin{proof}
Using the kernel of the
solution operator for the free Schr\"odinger equation
\[
(T_tf)(x)=\f{1}{\sqrt{4\pi i t}}\int_\R e^{i\f{|x-y|^2}{4t}}f(y)dy, \quad t\neq 0,
\]
for Schwartz functions $f$,
we obtain that
\beq \label{ineq:dispersiveproof}
\|U(t_0,t)f \|_{L^\infty}\leq  \f{\|f\|_{L^1}}{(4\pi|\int_{t_0}^t d(t')dt'|)^{1/2}}, \quad t\neq t_0. \notag
\eeq
Using this and the fact that for all $t$ and $t_0$ with $\lfloor t \rfloor = \lfloor t_0 \rfloor$,
$$
\left| \int_{t_0}^t d(t')dt'\right|= \left\{
            \begin{array}{ll}
              |\dav+1||t-t_0|, & \lfloor t_0 \rfloor \mbox{ even}, \\
\\
              |\dav-1||t-t_0|, &  \lfloor t_0 \rfloor \mbox{ odd},
            \end{array}
          \right.
$$
we complete the proof.
\end{proof}
Using Lemma \ref{lem:dispersivelemma} and the unitarity of $U(t_0,t)$ on $L^2(\R)$, via the well--known arguments, we obtain the one--dimensional Strichartz estimates on each  time interval $[n, n+1]$, for every $n\in \Z$. For the classical Strichartz estimates, see, e.g., \cite{Strichartz, GV, KT}. As usual, we say that a pair of exponents $(p,q)$  is admissible  if $2 \le p \le \infty$ and
\beq\label{admissible pair}
\frac{1}{p}+\frac{2}{q}=\frac{1}{2}.\notag
\eeq
and, also, for every $p\geq 1 $, denote by $p'$ the H\"older conjugate, i.e.,
$$
\f{1}{p}+\f{1}{p '}=1.
$$
\begin{lemma} \label{lem:strichartz}
Assume $\dav\neq \pm 1$. Let $(p,q)$ and $(p_0, q_0)$ be admissible pairs and $t_0\in [n,n+1]$ for some $n\in \Z$.
\begin{theoremlist}
\item  If $f\in L^2(\R)$,  then the function $t \mapsto U(t_0,t) f$ on $[n, n+1]$ belongs to $L^q([n, n+1], L^p)\cap \calC([n, n+1], L^2)$. Moreover, there exists a constant $C$ depending only on $p$ and $\dav$ such that for all $f\in L^2(\R)$
\beq \label{est:strichartz est 1}
\|U(t_0,t) f\|_{L^q([n, n+1], L^p)}\le C \|f\|_{L^2}.\notag
\eeq
\item
Let $I$ be an interval contained in $[n,n+1]$ and $t_0 \in  \ol{I}$. If $F\in L^{q'_0}(I,L^{p'_0})$, then the function
$$
t \mapsto\int_{t_0} ^t U(t',t)F(t')dt'
$$
on $I$ belongs to $L^q(I, L^p)\cap \calC(\ol{I}, L^2)$.  Moreover, there exists a constant $C$ depending only on $p, p_0$, and $\dav$ such that for all $F\in L^{q'_0}(I,L^{p'_0})$
\beq  \label{est:strichartz est 2}
\left(\int_I\left\|\int_{t_0} ^t U(t',t)F(\cdot, t')dt'\right\|_{L^{p}}^{q} dt \right)^{1/q}
\le C \left(\int_I \|F(\cdot, t)\|_{L^{p'_0}}^{q'_0}dt\right)^{1/q_0'}.\notag
\eeq
\end{theoremlist}
\end{lemma}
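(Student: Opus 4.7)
The plan is to derive Lemma \ref{lem:strichartz} from Lemma \ref{lem:dispersivelemma} together with the $L^2(\R)$-unitarity of $U(t_0,t)$ by the classical $TT^*$ argument, all performed on the single unit interval $[n,n+1]$. The restriction to such an interval matches the hypothesis $\lfloor t\rfloor = \lfloor t_0\rfloor$ of Lemma \ref{lem:dispersivelemma}; moreover on $[n,n+1]$ the coefficient $\dav+d_0(t)$ is constant, so $U(t_0,t)$ is just a free Schr\"odinger group with one effective dispersion, which makes the composition law $U(t',t)=U(t_0,t)U(t_0,t')^{-1}$ and the adjoint identity $U(t_0,t)^* = U(t_0,t)^{-1}$ transparent.

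First, by Riesz--Thorin interpolation of the dispersive estimate with the $L^2$-isometry, I obtain
\[
\|U(t_0,t)f\|_{L^p(\R)} \le C_p |t-t_0|^{-(1/2-1/p)}\|f\|_{L^{p'}(\R)}
\]
for every $2\le p \le\infty$ and every $t, t_0\in[n,n+1]$ with $t\ne t_0$, where $C_p$ depends only on $p$ and $\dav$. For admissible $(p,q)$ the exponent in $|t-t_0|$ equals $2/q$, precisely what the one-dimensional Hardy--Littlewood--Sobolev inequality requires.

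Part (i) then follows by the $TT^*$ principle: setting $Tf := U(t_0,\cdot)f$ as an operator $L^2(\R)\to L^q([n,n+1],L^p)$, the composition $TT^*$ has the form
\[
(TT^*F)(\cdot,t)=\int_n^{n+1} U(t',t)F(\cdot,t')\,dt',
\]
and the displayed dispersive bound combined with HLS yields $\|TT^*F\|_{L^qL^p}\lesssim \|F\|_{L^{q'}L^{p'}}$, whence $\|T\|\lesssim 1$. Continuity of $t\mapsto U(t_0,t)f$ into $L^2(\R)$ is immediate from the explicit semigroup form of $U(t_0,t)$ on $[n,n+1]$ together with the $L^2$-unitarity.

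For part (ii), the non-retarded version
\[
\left\|\int_I U(t',\cdot)F(\cdot,t')\,dt'\right\|_{L^q(I,L^p)} \lesssim \|F\|_{L^{q_0'}(I,L^{p_0'})}
\]
follows from $TT^*$ and duality, combining part (i) applied with both admissible pairs $(p,q)$ and $(p_0,q_0)$. The retarded integral $\int_{t_0}^t$ is then extracted via the Christ--Kiselev lemma, applicable because $q_0'<q$ holds for any two admissible pairs; the trivial diagonal case $(p_0,q_0)=(p,q)=(2,\infty)$ is handled directly by Minkowski's inequality and the $L^2$-unitarity. The main subtlety, and the step I expect to require the most care, is this Christ--Kiselev upgrade, since it is the only step not fully local in $t$ and is where the inhomogeneous estimate really picks up its retarded structure; the rest of the argument is bookkeeping once the correct factorisation $U(t',t)=U(t_0,t)U(t_0,t')^{-1}$ on $[n,n+1]$ is in place.
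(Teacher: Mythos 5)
Your proposal is correct and is precisely the argument the paper relies on: the paper gives no proof of Lemma \ref{lem:strichartz}, stating only that it follows from Lemma \ref{lem:dispersivelemma} and $L^2$-unitarity ``via the well--known arguments'' with references to the classical Strichartz literature, and your interpolation/$TT^*$/Christ--Kiselev writeup is exactly that standard argument, with the key observation (that $d(t)$ is the nonzero constant $\dav\pm 1$ on $[n,n+1]$, so $U(t_0,t)$ is a rescaled free group there) correctly identified. Note also that in one dimension every admissible pair has $q\ge 4$, so $q_0'\le 4/3<q$ and the Christ--Kiselev step never meets the problematic case $q_0'=q$.
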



\noindent
\textbf{Acknowledgements: }

Young--Ran Lee and Mi--Ran Choi are supported by the National Research Foundation of Korea(NRF) grants funded by the Korean government(MSIT--2020R1A2C1A01010735 and MOE--2019R1I1A1A01058151).

\renewcommand{\thesection}{\arabic{chapter}.\arabic{section}}
\renewcommand{\theequation}{\arabic{chapter}.\arabic{section}.\arabic{equation}}
\renewcommand{\thetheorem}{\arabic{chapter}.\arabic{section}.\arabic{theorem}}

\def\cprime{$'$}


\end{document}